\patchcmd{\@maketitle}{\LARGE \@title}{\LARGE\bfseries\@title}{}{}
\renewcommand{\@seccntformat}[1]{\csname the#1\endcsname.\quad}
\definecolor{darkblue}{rgb}{0,0,.5}
\def\th@plain{%
	\thm@notefont{}
	\itshape 
}
\def\th@definition{%
	\thm@notefont{}
	\normalfont 
}
\renewenvironment{proof}[1][\proofname]{\par
	\normalfont
	\topsep0\p@\@plus3\p@ \trivlist
	\item[\hskip\labelsep\itshape
	#1\@addpunct{.}]\ignorespaces
}{%
	\qed\endtrivlist
}
\newtheorem{theorem}{Theorem}[section]
\newtheorem{corollary}[theorem]{Corollary}
\newtheorem{proposition}[theorem]{Proposition}
\theoremstyle{definition}
\newtheorem{definition}[theorem]{Definition}
\theoremstyle{definition}
\newtheorem{example}[theorem]{Example}
\theoremstyle{definition}
\newtheorem{remark}[theorem]{Remark}
\theoremstyle{definition}
\theoremstyle{definition}
\numberwithin{equation}{section}
\renewcommand\theenumi{(\roman{enumi})}
\renewcommand{\labelenumi}{\rm (\roman{enumi})}
\newcommand{\ball}{\mathbb{B}}
\newcommand{\cball}{\overline{\mathbb{B}}}
\newcommand{\R}{\mathbb R}
\def\F{\mathcal{F}}
\def\O{\mathcal{O}}
\newcommand{\argmin}{\ensuremath{\operatorname*{argmin}}}
\newcommand{\inte}{\ensuremath{\operatorname{int}}}
\newcommand{\bd}{\ensuremath{\operatorname{bd}}}
\newcommand{\cl}{\ensuremath{\operatorname{cl}}}
\newcommand{\co}{\ensuremath{\operatorname{co}}}
\newcommand{\dom}{\ensuremath{\operatorname{dom}}}
\newcommand{\epi}{\ensuremath{\operatorname{epi}}}
\newcommand{\loc}{\ensuremath{\operatorname{loc}}}
\begin{document}

\title{\Large Locating Theorems of Differential Inclusions Governed by Maximally Monotone Operators\thanks{Research of M.N.~Dao and M.~Th\'era benefited from the FMJH Program Gaspard Monge for optimization and operations research and their interactions with data science, and was supported by a public grant as part of the Investissement d'avenir project, reference ANR-11-LABX-0056-LMH, LabEx LMH.}}
\author
{Minh N. Dao\thanks{School of Sciences, RMIT University, Melbourne, VIC 3000, Australia. Email: \texttt{minh.dao@rmit.edu.au}.},
~ 
Hassan Saoud\thanks{College of Engineering and Technology, American University of the Middle East, Kuwait. Email: \texttt{hassan.saoud@aum.edu.kw}.},
~~and~ 
Michel Th\'era\thanks{Laboratoire XLIM UMR-CNRS 7252, Universit\'e de Limoges, 87032 Limoges, France and Federation University Australia, Ballarat 3353, Australia. Email: \texttt{michel.thera@unilim.fr, m.thera@federation.edu.au}.}
}

\date{}

\maketitle

\begin{abstract}
In this paper, we are interested in studying the asymptotic behavior of the solutions of differential inclusions governed by maximally monotone operators. In the case where the LaSalle's invariance principle is inconclusive, we provide a refined version of the invariance principle theorem. This result derives from the problem of locating the $\omega$-limit set of a bounded solution of the dynamic. In addition, we propose an extension of LaSalle's invariance principle, which allows us to give a sharper location of the $\omega$-limit set. The provided results are given in terms of nonsmooth Lyapunov pair-type functions.
\end{abstract}

{\small
\noindent{\bfseries Keywords:}
Location theorem, 
invariance principle, 
$\omega$-limit set, 
Lyapunov functions, 
maximally monotone operator, 
nonsmooth dynamical systems
 
\noindent{\bfseries AMS Subject Classifications:}
37B25, 47J35, 93B05. 
}

\section{Introduction}

In a recent article, by combining elements of two basic paradigms in system dynamics, LaSalle's invariance principle and Lyapunov functions, Dontchev, Krastanov, and Veliov \cite{DKV} established some results on the location of the \emph{$\omega$-limit set} of solutions of a  nonautonomous differential inclusion 
\begin{equation}\label{1}
\dot{x}(t) \in  F (t, x(t)), 
\end{equation}
where $F$ is a set-valued mapping  defined on $\R^n$ and taking  its values in a nonempty subset of $\R^n$. Our main concern in this study is to extend these results and  to provide a localization of the  $\omega$-limit set for an initial value problem governed by a maximally monotone operator.

\subsection{Some background on differential inclusions}

The study of dynamical systems  governed by maximally monotone operators dates back to the first work in the late 1960s and 1970s, done by Komura \cite{komura}, Crandall and Pazy \cite{crandall-pazy69}, Br\'ezis \cite{brezis71}, and later by  many others. Recently, they have received renewed attention because, through adapted discretizations, they make it possible to obtain various numerical algorithms for optimization problems. We invite the readers to the recent survey \cite{csetnek} and on the relations between the continuous and discrete dynamics we refer to \cite{sorin}.
 
Our motivation in this work concerns another aspect of the study of these systems, namely, the localization of the $\omega$-limit set associated with a first-order differential inclusion of the form
\begin{equation} \label{E:eq1} 
\begin{cases}
\dot{x}(t) \in f(x(t)) - A(x(t)) \text{~~a.e.~~} t\in [0,+\infty) \\
x(0) =x_0 \in \cl(\dom A),
\end{cases}
\end{equation}
where $A: \R^n \rightrightarrows \R^n$ is a maximally monotone operator and $f$ is a Lipschitz continuous function defined on $\cl(\dom A) \subseteq \R^n$. The dynamic \eqref{E:eq1} can be seen as a Lipschitz perturbation of the first-order evolution 
\begin{equation}\label{E:eq2}
\dot x(t) \in -A (x(t)) \text{~~a.e.~~} t\in [0,+\infty), 
\quad x(0) =x_0\in \cl(\dom A).
\end{equation}

When $A$ is the subdifferential operator $\partial \varphi$ 
of an extended real-valued lower semicontinuous convex function (a convex potential) $\varphi: \R^n\to\R\cup \{+\infty\}$, several discretizations have been considered for inclusion \eqref{E:eq2}, in order to  construct some algorithms for minimizing $\varphi$. Among them, \emph{subgradient methods} \cite{hiriart-lemarechal-1991,  bgls-2002} and \emph{proximal methods} \cite{martinet} play an important role in numerical optimization. They depend on the choice of the time discretization of the dynamic. Denoting by $x_k$ the $k$th iterate and given a sequence of step sizes $(\gamma_k)$ with  $\gamma_k >0$, we may consider
\begin{enumerate}
\item 
the proximal iteration $x_{k+1}-x_k\in -\gamma_k \partial \varphi(x_{k+1})$ that is equivalent to choosing $x_{k+1} = \argmin_{x\in \R^n} (\varphi(x)+ \frac{1}{2\gamma_k} \|x -x_k\|^2)$. One obtains the \emph{proximal point algorithm} introduced by Martinet \cite{martinet} that converges to $\argmin \varphi$ (supposed to be nonempty) when $\sum_{k=0}^{+\infty} \gamma_k =+\infty$; 
\item
the subgradient iteration $x_{k+1}-x_k\in -\gamma_k \partial \varphi(x_k)$ that subsumes the \emph{gradient descent method} when $\varphi$ is differentiable. 
\end{enumerate}

While the existence and uniqueness of a solution to \eqref{E:eq1}  and \eqref{E:eq2} have been the object of many contributions \cite{Aubin-Cellina, benilan-brezis72, brezis71, Brezis1973, crandall-pazy69}, the ergodic convergence of \eqref{E:eq2} was carried out by Baillon and Br\'ezis \cite{BB76}. In the case where $A$ is the subdifferential operator of a lower semicontinuous convex function $\varphi$, Bruck \cite[Theorem~4]{bruck} proved that whenever $\varphi$ has a minimum, for every initial condition $x_0 \in \cl(\dom \partial \varphi) = \cl(\dom \varphi)$, there exists a unique solution $x(\cdot): [0, +\infty)\to \R^n$, absolutely continuous on $[\delta, +\infty)$ for all $\delta>0$ and which converges to a minimum point of $\varphi$. 

It is important to emphasize two relevant subcases of \eqref{E:eq1}, that is, when $A$ is the subdifferential of a lower semicontinuous convex function $\varphi$ or when $A$ is the normal cone to a closed convex set $C$. This leads us to consider the following two evolution equations:
\begin{equation}\label{E:eq3bis}
\dot x(t) \in f(x(t)) -\partial\varphi((x(t)) \text{~~a.e.~~} t\in [0,+\infty),\quad  x(0) =x_0 \in \cl(\dom \varphi)
\end{equation}
and 
\begin{equation} \label{E:eq3}
\dot x(t) \in f(x(t)) - N_C(x(t)) \text{~~a.e.~~} t\in [0,+\infty),\quad x(0) =x_0 \in C.
\end{equation}

Recently, differential inclusions \eqref{E:eq1}--\eqref{E:eq3}   have attracted much attention since they are relevant in various areas, including, for instance, physics, electrical engineering, economics, biology, population dynamics, and many others. 
For phenomena described by \eqref{E:eq1}, we refer the readers to  \cite{Adlybook, Adly-Goeleven, Adly-Hantoute-Bao, barbu, bastien, brezis71, brogliato, Gonzalez, Morosanu, Showalter}.  
 
One of the  typical  examples described by \eqref{E:eq3bis} is that of \emph{$RLD$ electric circuits} ($R$ is a resistance, $L$ is an inductor, and $D$ is a diode). Indeed, since the diode is a device that constitutes a rectifier which permits the easy flow of charges in one direction and restrains the flow in the opposite direction, the electrical superpotential of the diode is given by $\varphi_D(i) = |i|$, where $i$ stands for the current. Thus, by Kirchoff's law, the dynamic describing the $RLD$ circuit is given by the differential inclusion       
\begin{equation}
\tag{$RLD$}
\frac{di}{dt} + \frac{R}{L} i \in -\partial \varphi_D(i).
\end{equation}
We refer the readers to \cite[Section~3.5]{Adlybook} for further details on electric circuits.

\subsection{Motivation}

As already said, our main concern in this study is the localization of the $\omega$-limit set associated with \eqref{E:eq1} supplied with a given initial condition. Throughout this paper, the $\omega$-limit set is denoted by $\omega(x_0)$. This set is the collection of those points $z\in \R^n$ for which there exists a  solution $x(\cdot;x_0)$ of \eqref{E:eq1}, in a sense given later, defined and bounded on the interval $[0,+\infty)$, and a sequence $(t_k)_{k\in \mathbb{N}}$ with $t_k\in I$ such that $\lim_{k\to +\infty} x(t_k;x_0) =z$.
 
As they are defined, $\omega$-limit sets appear as the sets of points that can be the limit of subtrajectories and they give fundamental information about the asymptotic behavior of dynamical systems. For example, for autonomous dynamical systems, attractors are considered as $\omega$-limit sets. Therefore, $\omega$-limit sets play a crucial role in the study of stability theory and more precisely in  LaSalle's invariance principle that gives a criterion for the asymptotic behavior of autonomous dynamical systems. Moreover, $\omega$-limit sets are nonempty and enjoy noteworthy topological and geometric properties such as compactness, invariance, and connectedness. 
    
They are also considered as the smallest set that a solution approaches. In general, computing $\omega$-limit sets for nonlinear dynamical systems is a difficult task. However, the author in \cite{Hainry} shows that the $\omega$-limit set is not only computable for linear dynamical systems but is also in the case of semi-algebraic sets. 

One of the  main tools for studying the asymptotic behavior of the solution of a dynamical system is LaSalle's invariance principle. It is related to Lyapunov's theory where the positive definiteness of the Lyapunov function is relaxed. However, in some applications, LaSalle's theorem fails to be applicable (see \cite{Arsie2}). Another way to ensure the desired asymptotic stability of the system is to prove that the set where the derivative along the trajectories of the Lyapunov function vanishes is asymptotically stable. Since it is not always possible to find the $\omega$-limit and since this set needs to be known in order to check the convergence of the solution, locating it allows us to give an alternative way to deal with the case where the invariance principle is inconclusive. In fact, Lyapunov functions play an important role in obtaining a set that contains the $\omega$-limit set, easier to find in practice and attracts the solution of the system.

In \cite{Arsie1}, the authors studied the locating problem for autonomous differential equations with a Lipschitz vector field over a Riemannian manifold, using a smooth Lyapunov-type functions. Indeed, they proved that, if the $\omega$-limit set is contained in a closed subset $S$ and if $V$ is a Lyapunov-type function that decreases along the solution on $S$, then it is located in one and only one connected component of the set where the derivative of $V$ along the solution vanishes on $S$. 
In the spirit of \cite{Arsie1}, the authors of \cite{DKV} studied the locating problem for solutions of nonautonomous differential inclusions of the form $\dot{x} \in F(t,x(t))$, where $F$ is a cusco (upper semicontinuous with nonempty compact and convex values) multifunction. Their results are expressed in terms  of a locally Lipschitz Lyapunov pair-type by assuming that the multifunction $F$ is  bounded in a neighborhood of the initial condition. Note that, in both references \cite{Arsie1,DKV}, the function $V$ is neither assumed to be a Lyapunov function nor the set $S$ is assumed to be invariant. If this is the case, the standard LaSalle's invariance principle provides us with the best location of the $\omega$-limit set that is included in $S$.
    
Benefiting from the properties of a maximally monotone operator and using lower semicontinuous Lyapunov-type functions, the first part of this paper is dedicated to the statement of a location theorem. The  result obtained can be viewed as a refined version of the invariance principle. Indeed, since a maximally monotone operator is locally bounded on the interior of its domain (if this is nonempty), the standard assumption in \cite{DKV} is covered. Moreover, using nonsmooth analysis tools and due to the characterization of Lyapunov pairs given in \cite{Adly1} and \cite{BTN}, we are able to give a sufficient condition to ensure the location of $\omega(x_0)$. More precisely, if $(V,W)$ is a lower semicontinuous Lyapunov pair-type for \eqref{E:eq1}, for any closed set $S$ contained in the intersection of the domain of $V$ and the interior of the domain  of the operator $A$, we prove that the $\omega$-limit set is contained in the intersection between $S$ and the set where $W$ is nonpositive. Note that our condition imposed on $V$ and $W$ is more general than the one given in \cite{DKV}. In recent years, extensive research on Lyapunov-pairs for \eqref{E:eq1} has been conducted; see \cite{Adly1,Adly3,BTN}. For example, $(\varphi, \|(\partial\varphi)^\circ\|^2)$ is a Lyapunov pair for the inclusion
\[
\dot x(t) \in -\partial \varphi(x(t)),
\]
where $(\partial \varphi)^\circ$ stands for the minimal norm section of $\partial \varphi$.  

As mentioned previously, LaSalle's invariance principle provides the best location of $\omega(x_0)$ in the case where the set $S$ is  assumed to be invariant. Therefore, in the second part of this paper, we propose a generalized version of the LaSalle invariance principle inspired by the work given in \cite{Haddad}. This allows us to provide a stronger version of the locating problem compared to the one  proposed by the standard invariance principle theorem. Indeed, we prove that, for an invariant set $S$, $\omega(x_0)$ is located in the union of the largest invariant sets contained in intersections over the finite intervals of the closure of the lower semicontinuous Lyapunov level surfaces. In particular, when the Lyapunov function is continuously differentiable, the generalized invariance principle coincides with the standard invariance principle. Our result generalizes the ones given in \cite{Arsie1} which can be covered by taking $A$ equal to the null operator. It also generalizes the results in \cite{DKV}, since it can be applicable to the case where the function $f$ is replaced by a cusco multifunction. In addition, the proposed invariance principle clearly covers the one given in \cite{Qi}, where the operator $A$ is the Fenchel subdifferential of an extended real-valued, proper, lower semicontinuous, and convex function.

\subsection{Content of the paper}

The layout of the paper is as follows. Notations, definitions from nonsmooth analysis, and some properties of maximally monotone operators will be given in the next section. In Section~\ref{sec3}, we will state our main theorem about the location of the $\omega$-limit set followed by some results  discussed under different hypotheses. Finally, Section~\ref{sec4} is devoted to the generalization of LaSalle's invariance principle for the problem \eqref{E:eq1} and its applications.

\section{Preliminaries from convex and variational analysis and monotone operator theory}

We begin this section by providing the notations and gathering some  tools on convex and variational analysis and also on monotone operator theory that we will employ in our subsequent analysis and results.

Our notation is the standard one used in the literature related to these notions \cite{Bauschke, Brezis1973, clarke1, Boris2018, Rock}. Throughout this paper, $\R^n$ is the $n$ dimensional Euclidean space with inner product $\langle\cdot, \cdot\rangle$ and induced norm $\| \cdot\|$, i.e., for all $x \in \R^n$, $\| x\| := \sqrt{\langle x, x\rangle}$. We denote by $\ball(x;r)$ the open ball in $\R^n$ with center $x$ and radius $r$. For a set $S \subseteq \R^n$, $\inte S$, $\bd S$, $\cl S$, and $\co S$ stand for the interior, the boundary, the closure, and the convex hull of $S$, respectively. The \emph{distance function} to the set $S$ is defined by $d(x,S) := \inf\{\| x-y \|:\, y\in S\}$ and the \emph{projection} onto $S$ is defined by $P_S(x) =\{y\in S:\, \|x-y\| =d(x,S)\}$. We denote the closed ball around the set $S$ with radius $r$ by $\cball(S;r) :=\{x:\, d(x,S) \leq r\}$. In addition, $S^\circ$ stands for the set of points of \emph{minimal norm} in $S$, i.e., $S^\circ := \{x\in S:\, \forall s\in S,\; \|x\| \leq \|s\|\} = P_S(0)$.
   
Recall that $\mathbf{L}^\infty ([a,b]; \R^n)$ is the Banach space of all the (equivalence classes by the relation equal almost everywhere) measurable functions $f:[a,b]\to \R^n$ that are essentially bounded on $[a,b]$. It is equipped with the norm $\| f\|_\infty =\operatorname{ess sup}_{x \in [a,b]} \| f(x)\|$. $\mathbf{L}^\infty_{\loc}([a,b];\R^n)$ refers to the space of those functions $f$ such that for every compact $K\subseteq [a,b]$, $f\in \mathbf{L}^\infty (K;\R^n)$.

Let $\varphi: \R^n \to \R\cup\{+\infty\}$ be an extended real-valued function. The \emph{(effective) domain} and \emph{epigraph} of $\varphi$ are defined by  
\[
\dom \varphi :=\{x\in \R^n:\, \varphi(x)<+\infty\}
\text{~~and~~}
\epi\varphi:=\{(x,\alpha)\in \R^n\times\mathbb{R}:\, \varphi(x)\leq\alpha\}.
\]
We say that $\varphi$ is \emph{proper} if $\dom \varphi\neq \varnothing$ and that $\varphi$ is \emph{convex} if $\epi\varphi$ is convex. The function $\varphi$ is said to be \emph{lower semicontinuous} at $y \in \R^n$ if for every $\alpha \in \R$ with $\varphi(y) > \alpha$, there is a $\delta > 0$ such that 
\[
\forall x\in \ball(y;\delta),\quad \varphi(x) > \alpha.
\] 
We simply say that $\varphi$ is lower semicontinuous if it is lower semicontinuous at every point of $\R^n$. Equivalently, $\varphi$ is lower semicontinuous if and only if its epigraph is closed.

Given a subset $S$ of $\R^n$ and $\alpha \in \R$, the set $[ \varphi = \alpha]_{\mid S} := \{x\in S:\,  \varphi (x) = \alpha \}$ stands for the \emph{$\alpha$-level set} of the function $\varphi$. For $\alpha, \beta \in \R$,  $\alpha < \beta$, the set 
\[
[\alpha \leq  \varphi \leq \beta]_{\mid S} :=\{x\in S:\, \alpha \leq   \varphi(x) \leq \beta \}
\]
is called the \emph{$[\alpha,\beta]$-sublevel set} of the function $\varphi$.  

We denote by $\mathcal{F}(\R^n)$ (resp., $\F^+(\R^n)$) the set of extended real-valued, proper, and lower semicontinuous functions (resp., nonnegative). Finally, given a subset $S$ of $\R^n$ and a function $\varphi \in \F^+(\R^n)$, we note $S^+_\varphi  := \{x\in S :\, \varphi(x) > 0\}$. Observe that, by the lower semicontinuity of $\varphi$, the set $S^+_\varphi $ is open in $S$.

We proceed by giving some definitions and results from \emph{nonsmooth analysis}. The basic references for these notions and facts can be found in details in \cite{clarke1,clarke2,Rock}. Let $\varphi$ be a function of $\F(\R^n)$ and let $x\in \dom \varphi$. We say that a vector $\zeta \in \R^n$ is a \emph{proximal subgradient} of $\varphi$ at $x$ if there exist $\eta > 0$ and $\sigma \geq 0$ such that 
\[
\forall y \in \ball(x; \eta),\quad \varphi(y) \geq \varphi(x) + \langle \zeta \,, y-x \rangle - \sigma \| y-x\|^2.
\]
The \emph{proximal subdifferential} of $\varphi$ at $x$ is the collection of all proximal subgradients and is denoted by  $\partial_P \varphi (x)$. The set $\partial_P \varphi (x)$ is convex, possibly empty, and not necessarily closed.
 
A vector $\zeta \in \R^n$ is called a \emph{Fr\'echet subgradient} of $\varphi$ at $x$ if the following inequality holds
\[
\forall y \in \R^n,\quad \varphi(y) \geq \varphi(x) + \langle\zeta, y - x\rangle + o(\| y - x\|).
\]
The set of such $\zeta$ is called the \emph{Fr\'echet subdifferential} of $\varphi$ at $x$, and it is denoted by $\partial_F \varphi(x)$.

The \emph{limiting subdifferential} of $\varphi$ at $x$, denoted by $\partial_{L}\varphi(x)$, is the set of vectors $\xi\in \R^n$ such that there exist a sequence $(x_{k})_{k\in \mathbb{N}}$ with $x_{k}\stackrel{\varphi} \to x$ and a sequence $(\xi_{k})_{k\in \mathbb{N}}$ with $\xi_{k}\in\partial_{P}\varphi(x_{k})$ and $\xi_{k} \to\xi$. Here, the notation $x_k \stackrel{\varphi}{\to} x$ means that
 $x_k \to x$ and $\varphi(x_k) \to \varphi(x)$.

A vector $\zeta \in \R^n$ is called a \emph{horizon subgradient} of $\varphi$ at $x$ if there exist sequences $(\alpha_k)_{k\in \mathbb{N}} \subset \R^+$ and $(x_k)_{k\in \mathbb{N}}, (\zeta_k)_{k\in \mathbb{N}} \subset \R^n$ such that
\[
\alpha_k \downarrow 0,\,\,x_k \stackrel{\varphi}\to x,\,\,
\zeta_k \in \partial_P \varphi(x_k),\,\,\alpha_k \zeta_k \to \zeta.
\]
The set of such $\zeta$ is called the \emph{horizon subdifferential}  of $\varphi$ at $x$ and is denoted by $\partial_{\infty} \varphi(x)$.

Finally, the \emph{Clarke subdifferential} of $\varphi$ at $x$ is given by 
\[
\partial_C \varphi(x) = \cl\Big(\co\big(\partial_L \varphi(x) + \partial_{\infty} \varphi (x)\big)\Big).
\]
It is clear, from the definitions above, that $\partial_P \varphi (x) \subseteq \partial_F \varphi (x)\subseteq \partial_L \varphi (x) \subseteq \partial_C \varphi (x)$. In addition, if the function $\varphi$ is $C^1$ near $x$, then $\partial_P \varphi (x) \subseteq \{\varphi'(x)\} = \partial_C \varphi(x)$. If $\varphi \in C^2$, then $\partial_P \varphi (x) = \partial_C \varphi (x) =  \{\varphi'(x) \}$. In the case where $\varphi$ is Lipschitz near $x$, then $\partial_{\infty} \varphi (x) =\{0\}$ and $\partial_C \varphi(x) = \cl\big(\co (\partial_L \varphi(x))\big)$. 
For $x \notin \dom \varphi$, we have $\partial_P \varphi(x) = \partial_F \varphi(x) = \varnothing$. If the function $\varphi$ is convex, then for every $x \in \R^n$, we have 
$ \partial_P \varphi (x) =  \partial_F \varphi (x) = \partial_L \varphi (x) = \partial_C \varphi (x) = \partial \varphi (x)$, where $\partial \varphi(x)$ stands for the \emph{Fenchel subdifferential} of $\varphi$ at $x$ which is defined by  
\[
\zeta \in \partial \varphi(x) \iff \forall y \in \R^n,\quad \varphi(y) \geq 
 \varphi(x) +\langle\zeta, y-x\rangle.
\]

Let $S$ be a nonempty subset of $\R^n$. The \emph{indicator function} of $S$ is the function $\iota_S$ taking the values $0$ on $S$ and $+\infty$ off $S$. The \emph{proximal}, \emph{Fr\'echet}, \emph{limiting}, and \emph{Clarke} \emph{normal cones} to $S$ are defined as 
\[
N_S^{\bullet}(x) := \partial_\bullet \iota_S(x),
\]
where $\bullet$ stands for $P$, $F$, $L$, or $C$. A geometric characterization of the notion of subdifferentials, previously defined, is given by
\[
\zeta \in \partial_{\bullet} \varphi (x) \iff (\zeta,-1)\in
 N_{\epi \varphi}^{\bullet}(x, \varphi(x)).
\]

Another central tool concerns the theory of maximally monotone operators.

\subsection{Maximally monotone operators}

A multifunction $A: \R^n \rightrightarrows \R^n$ is said to be \emph{monotone} if 
\[
\forall (y_1,y_2) \in A(x_1) \times A(x_2),\quad \langle y_1 - y_2,x_1 - x_2 \rangle \geq 0. 
\]
The \emph{domain} of $A$ is the set 
\[
\dom A = \{x\in \R^n :\,  A(x) \neq \varnothing \}.
\]
A monotone operator $A$ is \emph{maximally monotone} provided its graph $ \{(x,y)\;: y\in A(x)\}$ cannot be properly enlarged without destroying monotonicity. 

Unlike its closure, the domain of a maximally monotone operator is not necessarily closed and convex (it is nearly convex; e.g., see \cite{Rock}). However, its values are closed and convex but they are not supposed to be bounded or even nonempty. A typical example of a maximally monotone operator is the Fenchel subdifferential of an extended real-valued proper lower semicontinuous convex function $\varphi$. We have 
\[
\dom (\partial \varphi) \subseteq \dom \varphi \subseteq \cl(\dom \varphi) = \cl(\dom \partial \varphi).
\]
	
\begin{definition}
Let $S$ be a subset of $\R^n$. A maximally monotone operator $A: \R^n \rightrightarrows \R^n$ is said to be
\begin{enumerate}
\item  
\emph{locally minimally bounded} on $S$ if for all $x\in S \cap \dom A$, there exist $M, r > 0$ such that 
\[
\forall y \in S\cap \dom A \cap \ball(x;r), \quad \| A^\circ (y)\| \leq M;
\] 
\item  
\emph{locally bounded} on $S$ if for all $x\in S \cap \dom A$, there exist $M, r > 0$ such that 
\[			
\forall y\in S\cap \dom A \cap \ball(x;r),\ \forall u\in A(y),\quad \| u \| \leq M.
\]
\end{enumerate}
\end{definition}
It is clear that a locally bounded operator is locally minimally bounded.

\begin{proposition}[\cite{Rock1, phelps}]
\label{proposition:prop1}
Let $A: \R^n \rightrightarrows \R^n$ be a maximally monotone operator and let $x\in \cl(\dom A)$. Then the following hold:  
\begin{enumerate}
\item 
$A$ is locally bounded at $x$ if and only if $x\in \inte(\dom A)$.
\item
If $\inte(\co (\dom A)) \neq \varnothing$, then $\inte(\dom A) $ is a nonempty convex set and also $\inte (\dom A) =\inte (\co(\dom A)) = \inte (\cl (\dom A))$. 
\end{enumerate}
\end{proposition}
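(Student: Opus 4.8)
The plan is to prove the two items separately, using as black boxes two classical structural facts about a maximally monotone operator $A$ on $\R^n$: the \emph{near-convexity} of $\dom A$, i.e. the existence of a convex set $C_0$ with $C_0 \subseteq \dom A \subseteq \cl C_0$ (so in particular $\cl(\dom A)$ is convex; see \cite{Rock}), and the convergence of the resolvent $J_\lambda := (I+\lambda A)^{-1}$ to the metric projection, $J_\lambda(z) \to \proj_{\cl(\dom A)}(z)$ as $\lambda \downarrow 0$, together with the inclusion $A_\lambda(z) := \lambda^{-1}(z - J_\lambda(z)) \in A(J_\lambda(z))$ for all $z \in \R^n$ (see \cite{Brezis1973}). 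For the forward implication of (i), namely $x \in \inti(\dom A) \Rightarrow A$ locally bounded at $x$, I would argue by contradiction. If boundedness failed there would exist $x_k \to x$ and $y_k \in A(x_k)$ with $\|y_k\| \to \infty$; normalizing $u_k := y_k/\|y_k\|$ and passing to a subsequence gives $u_k \to u$ with $\|u\| = 1$. Since $x \in \inti(\dom A)$, we have $x + tu \in \dom A$ for small $t > 0$; choosing $w \in A(x+tu)$ and writing monotonicity for the pairs $(x_k,y_k)$ and $(x+tu,w)$,
\[
\langle y_k - w,\, x_k - x - tu \rangle \geq 0,
\]
then dividing by $\|y_k\|$ and letting $k \to \infty$ (using $u_k \to u$, $w/\|y_k\| \to 0$, $x_k - x - tu \to -tu$) yields $\langle u, -tu\rangle \geq 0$, i.e. $-t \geq 0$, a contradiction.

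The converse of (i) I would prove contrapositively. Suppose $x \in \cl(\dom A)$ but $x \notin \inti(\dom A)$. Near-convexity gives $\inti(\dom A) = \inti(\cl(\dom A))$, so $x$ is a point of the convex set $\cl(\dom A)$ not lying in its interior; the supporting hyperplane theorem provides a unit vector $v$ with $\langle v, y - x\rangle \leq 0$ for all $y \in \cl(\dom A)$. Fix $s > 0$ and set $z := x + sv$. Expanding
\[
\|z - y\|^2 = \|x-y\|^2 + 2s\langle v, x - y\rangle + s^2 \geq s^2 = \|z - x\|^2
\]
and using $\langle v, x - y\rangle \geq 0$ shows $\proj_{\cl(\dom A)}(z) = x$. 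Taking $\lambda_n \downarrow 0$ and setting $x_n := J_{\lambda_n}(z) \in \dom A$ and $y_n := A_{\lambda_n}(z) \in A(x_n)$, I get $x_n \to x$ while $z - x_n \to sv \neq 0$, hence $\|y_n\| = \|z - x_n\|/\lambda_n \to \infty$. Thus $A$ is not locally bounded at $x$, which is the contrapositive.

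For item (ii), near-convexity furnishes a convex set $C_0$ with $C_0 \subseteq \dom A \subseteq \cl C_0$; then the chain $C_0 \subseteq \dom A \subseteq \co(\dom A) \subseteq \cl C_0$ gives $\cl(\dom A) = \cl(\co(\dom A)) = \cl C_0$. Since $\co(\dom A)$ is convex with nonempty interior by hypothesis, the identity $\inti C = \inti(\cl C)$ valid for any convex $C$ yields $\inti(\co(\dom A)) = \inti(\cl(\co(\dom A))) = \inti(\cl(\dom A))$ and, applied to $C_0$, $\inti C_0 = \inti(\cl C_0) = \inti(\co(\dom A))$. Monotonicity of the interior along $C_0 \subseteq \dom A \subseteq \co(\dom A)$ then squeezes $\inti(\dom A)$ between two equal sets, forcing $\inti(\dom A) = \inti(\co(\dom A)) = \inti(\cl(\dom A))$, a set which is nonempty and convex.

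The convex-analytic squeeze in (ii) and the normalization argument in the forward part of (i) are routine. The heart of the matter is the converse of (i): once convexity of $\cl(\dom A)$ and the resolvent-to-projection convergence are granted, the blow-up $\|y_n\| \to \infty$ is automatic, so the whole difficulty is concentrated in those two cited facts. Reproving them from scratch — essentially a supporting-hyperplane construction of domain points with exploding images — is precisely the technical core of the Rockafellar--Phelps result, which I would cite rather than reconstruct.
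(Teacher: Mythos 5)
The paper gives no proof of this proposition at all: it is imported verbatim from Rockafellar \cite{Rock1} and Phelps \cite{phelps}, so there is no internal argument to measure yours against. Judged on its own terms, your proof is correct. The forward half of (i) is the standard normalization-plus-monotonicity argument and is sound (it only needs plain monotonicity once $x+tu\in\dom A$ is secured from $x\in\inti(\dom A)$). The converse is where you take a genuinely distinct route: a common argument handles a boundary point $x\in\dom A$ by showing maximality forces $A(x)+N_{\cl(\dom A)}(x)\subset A(x)$, so that $A(x)$ itself is unbounded, and that argument does not directly reach points of $\cl(\dom A)\setminus\dom A$; your resolvent construction, with $J_{\lambda_n}(x+sv)\to\proj_{\cl(\dom A)}(x+sv)=x$ while $\|A_{\lambda_n}(x+sv)\|=\|x+sv-J_{\lambda_n}(x+sv)\|/\lambda_n\to\infty$, treats every $x\in\cl(\dom A)\setminus\inti(\dom A)$ uniformly, which is exactly the generality the proposition asserts. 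The price is two imported classical facts --- near-convexity of $\dom A$ (which the paper itself invokes, citing \cite{Rock}) and the convergence $J_\lambda\to\proj_{\cl(\dom A)}$ from \cite{Brezis1973} --- but neither of these is usually derived from local boundedness, so there is no circularity, and your supporting-hyperplane computation showing $\proj_{\cl(\dom A)}(x+sv)=x$ is correct (including the degenerate case where $\cl(\dom A)$ is not full-dimensional, where any normal to its affine hull serves as $v$). Item (ii) is a clean convex-analytic squeeze between $C_0$, $\dom A$ and $\co(\dom A)$, using $\inti C=\inti(\cl C)$ for convex sets in $\R^n$, and is also correct.
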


As an immediate consequence of \cref{proposition:prop1}, the Fenchel subdifferential of a proper lower semicontinuous convex function is locally bounded on the interior of its domain. When applied to the indicator function of a convex closed set $C$, this subsumes that the normal cone operator $N_C$ to a closed convex set $C$ is locally bounded on $\inte C$.
 
Given a maximally monotone operator $A$ and a Lipschitz continuous function $f$ defined on $\cl (\dom A) \subseteq \R^n$, we consider again \eqref{E:eq1}. For a fixed $T> 0$ and $x_0 \in \cl (\dom A)$, it is known that there exists a unique absolutely continuous function $x(\cdot;x_0): [0,T] \to \R^n$ with $\dot{x}(\cdot;x_0) \in  \mathbf{L}^\infty_{\loc} ((0,T], \R^n)$ and, for all $t >0$, $x(t;x_0) \in \dom A$ such that $x(\cdot;x_0)$ satisfies \eqref{E:eq1}.

Note that the existence of such a solution occurs if $x_0 \in \dom A$, $\inte (\co (\dom A)) \neq \varnothing$, and the underlying space is finite dimensional (which is the case here), or if $A= \partial \varphi$, where $ \varphi$ is an extended real-valued proper lower semicontinuous convex function. 

Furthermore, we have 
\[
\dot{x}(\cdot;x_0) \in \mathbf{L}^\infty ([0,T],\R^n) \iff x_0 \in \dom A.
\]
In this case, $x(\cdot;x_0)$ is right differentiable at each 
$s\in [0,T)$ and 
\[
\frac{d^+ x(\cdot;x_0)}{dt}(s) = f(x(s;x_0)) - P_{A(x(s))}(f(x(s))) = (f(x(s)) - A(x(s)))^\circ.
\]
Also, we have the semi-group property 
\[
\forall s,t \geq 0,\quad x(s;x(t;x_0)) = x(s+t;x_0).
\]

Now, let us recall some important notions concerning the dynamic \eqref{E:eq1}.

\begin{definition}
The \emph{$\omega$-limit set} associated with \eqref{E:eq1}, supplied with the initial condition $x(0) = x_0$, is defined by 
\begin{equation*}
\omega (x_0) :=  \bigcap_{T>0}\cl\left(x([T,+
\infty)\right).
\end{equation*}
\end{definition}
In other terms, a point $z\in \omega (x_0)$ if there exists a sequence $(t_k)_{k\in \mathbb{N}}$ with $t_k \to +\infty$ as $k \to +\infty,$ such that $\lim_{k \to +\infty} x(t_k;x_0) = z$. It is well known that for every bounded solution of \eqref{E:eq1}, the $\omega$-limit set is nonempty, compact, invariant, and connected (see \cite{khalil, Haddad}). Moreover, it is easy to check that if $\omega (x_0)$ contains a Lyapunov stable equilibrium point $z$, then $\lim_{t \to +\infty} x(t;x_0) = z$ and $\omega (x_0) = \{z\}$.

\begin{definition}
\label{def3} 
A set $S \subseteq \cl (\dom A)$ is said to be an \emph{invariant set} with respect to \eqref{E:eq1} if for all $x_0 \in S$ and all $t \geq 0$, $x(t;x_0) \in S$.
\end{definition}

Finally, we consider the ordinary differential equation of the form 
\begin{equation}
\label{eq2}
\dot{x}(t) = f(x(t)),\quad x(0) = x_0 \in \mathbb{R}^n,  
\end{equation}
where $f$ is a locally Lipschitz function defined on an open set $\O$ of $\R^n$. This is a special case of \eqref{E:eq1} when $A \equiv 0$. In the setting of \eqref{eq2}, we recall the well-known LaSalle's invariance principle for its importance since it is the core of the stability theory of dynamical systems. 

\begin{theorem}[\cite{khalil}]\label{theorem:thm0}
Let $S \subseteq \O$ be a compact invariant set with respect to \eqref{eq2}. Let $V: \O \to \R$ be a continuously differentiable function such that $\dot{V}(x) \leq 0$ for all $x\in S$. If $E:= \{ x\in S:\,  \dot{V}(x) = 0 \}$, then every solution of \eqref{eq2} starting in $S$ approaches the largest invariant set $M$ contained in $E$:  
\[
\lim_{t\to +\infty} d(x(t), M) =0.
\]
In particular,
\[
\omega (x_0) \subseteq M \subseteq E \subseteq S. 
\]
\end{theorem}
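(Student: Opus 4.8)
The plan is to exploit that $V$ is a nonincreasing Lyapunov function along any trajectory lying in $S$, and then to transfer the resulting limit value of $V$ onto the $\omega$-limit set, whose invariance forces $\dot V$ to vanish there. Fix $x_0 \in S$ and let $x(\cdot) := x(\cdot, x_0)$ denote the (unique, since $f$ is locally Lipschitz) solution of \eqref{eq2}. Because $S$ is invariant with respect to \eqref{eq2}, one has $x(t) \in S$ for all $t \ge 0$; in particular the solution is bounded, hence defined on $[0,+\infty)$, and by the facts recalled above the set $\omega(x_0)$ is nonempty, compact, invariant, and (since $S$ is closed) contained in $S$. As $V$ is $C^1$ and $\dot V(x) \le 0$ on $S$, the map $t \mapsto V(x(t))$ is nonincreasing; being also bounded below (because $V$ is continuous on the compact set $S$), it converges to some $c \in \R$ as $t \to +\infty$.

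Next I would show that $V \equiv c$ on $\omega(x_0)$. If $z \in \omega(x_0)$, pick $t_k \to +\infty$ with $x(t_k) \to z$; continuity of $V$ gives $V(z) = \lim_k V(x(t_k)) = c$, so $V$ is constant on $\omega(x_0)$. The crucial step is then to upgrade this to $\dot V = 0$ on $\omega(x_0)$: for $z \in \omega(x_0)$, invariance of $\omega(x_0)$ means the solution $x(\cdot, z)$ remains in $\omega(x_0)$, where $V$ takes the constant value $c$; hence $t \mapsto V(x(t,z))$ is constant and $\dot V(z) = \frac{d}{dt} V(x(t,z))\big|_{t=0} = 0$. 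Therefore $\omega(x_0) \subset E$. Since $\omega(x_0)$ is itself invariant and $M$ is by definition the largest invariant set contained in $E$, we obtain $\omega(x_0) \subset M$, and the chain $\omega(x_0) \subset M \subset E \subset S$ follows, the last two inclusions being immediate from the definitions of $M$ and $E$.

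Finally, to reach $\lim_{t\to+\infty} d(x(t), M) = 0$, I would first establish the standard fact that a bounded trajectory approaches its own $\omega$-limit set, that is, $d(x(t), \omega(x_0)) \to 0$. This is proved by contradiction: if $d(x(t_k), \omega(x_0)) \ge \varepsilon$ along some $t_k \to +\infty$, then boundedness of $(x(t_k))$ in the compact set $S$ yields a subsequence converging to some $y$, which by definition belongs to $\omega(x_0)$, contradicting $d(y, \omega(x_0)) \ge \varepsilon > 0$. Since $\omega(x_0) \subset M$ we have $d(x(t), M) \le d(x(t), \omega(x_0))$, and the conclusion follows.

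The main obstacle is the passage from ``$V$ constant on $\omega(x_0)$'' to ``$\dot V = 0$ on $\omega(x_0)$'': it is precisely here that the \emph{invariance} of the $\omega$-limit set (rather than mere convergence along a single sequence) is indispensable, since $\dot V(z)$ is a statement about the whole trajectory through $z$, not just about the point $z$ itself.
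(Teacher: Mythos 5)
Your proof is correct: it is the standard argument for LaSalle's invariance principle (monotonicity and boundedness of $t\mapsto V(x(t))$ give a limit value $c$, continuity forces $V\equiv c$ on $\omega(x_0)$, forward invariance of $\omega(x_0)$ upgrades this to $\dot V=0$ there, and compactness yields $d(x(t),\omega(x_0))\to 0$). The paper itself states this theorem without proof, citing \cite{khalil}, and your argument is essentially the one found in that reference, so there is nothing to reconcile.
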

We denote by $\dot{V}(x(t;x_0)) := \frac{dV}{dt} (x(t;x_0))$ the derivative of $V$ along the solution of \eqref{eq2}: $\dot{V}(x(t;x_0)) =\langle\nabla V(x),f(x) \rangle$.

\section{Location via closed sets}
\label{sec3}

As mentioned previously in \cref{theorem:thm0}, the $\omega$-limit set is contained in the largest invariant set $S$ where the derivative of $V$ along the solution vanishes for all $x \in S$. The main result of this section provides a location theorem of the $\omega$-limit set of bounded solutions of \eqref{E:eq1} via sets which are not necessarily invariant. This result is helpful when the invariance theorem cannot be applied directly. Thus, it is considered as a refined version of the invariance principle. In addition, using the notion of a lower semicontinuous Lyapunov pair-type, our result can also be seen as a generalization of results given in \cite{Arsie1} and \cite{DKV}.

\vskip 2mm
\textbf{Blanket assumptions.}  Throughout this section we assume that $V,W \in \F(\R^n)$ are such that the following hold:
\begin{enumerate}
\renewcommand{\theenumi}{(A\arabic{enumi})}
\renewcommand{\labelenumi}{(A\arabic{enumi})}
\item\label{A1} 
For all $x_0 \in \cl(\dom A)$ and all $\rho_0 > 0$, there exists $\bar y \in \ball(x_0;\rho_0) \cap \dom A$ such that $\ball(\bar y;\rho_{\bar y}) \cap \dom V  \subseteq \inte (\dom A)$ for some $\rho_{\bar y }> 0$.
\item\label{A2} 
There exists a closed subset $S$ of $\R^n$ such that $\omega(x_0) \subseteq S \subseteq \dom V \cap \ball(\bar y;\rho_{\bar y})$. 
\item
$\dom W= \left( \dom V \cap \ball(\bar{y}; \rho_{\bar y}) \setminus S\right)\cup S^+_W$.
\end{enumerate}

\begin{remark}
The blanket assumption~\ref{A1} looks somewhat technical but it is very useful in the rest of this paper. In fact, it determines the relationship between the domain of the operator $A$ and where the function $V$ is defined. On the other hand, assumption~\ref{A2} is equivalent to saying that every bounded solution of \eqref{E:eq1} starting at $x_0$ is attracted by $S$. Combined together, assumptions~\ref{A1} and \ref{A2} ensure the nonemptiness of $\inte (\dom A)$ and thus, due to \cref{proposition:prop1}, we have the local boundedness of the operator $A$ at each point of this set.
\end{remark}

Now, we are ready to state the main theorem of this section.
\begin{theorem}
\label{theorem:thm1}
Given $x_0 \in \cl(\dom A)$, suppose that the corresponding solution $x(\cdot;x_0)$ of \eqref{E:eq1} is bounded and that
\begin{enumerate}
\renewcommand{\theenumi}{($\mathcal{H}_\arabic{enumi}$)}
\renewcommand{\labelenumi}{($\mathcal{H}_\arabic{enumi}$)}
\item\label{H1} 
for all $x\in \dom W$, 
\begin{equation}\label{eq3}
\sup_{\zeta \in \partial_{\bullet} V(x)}\inf_{v \in A(x)}\langle \zeta,f(x) -v \rangle \leq -W(x),
\end{equation}
where $\partial_{\bullet}$ stands for $\partial_P$ or $\partial_F$;

\item\label{H2} 
for all $x\in \dom V$,      
\begin{equation*}
V(x) = \liminf_{w \xrightarrow{\dom A} x} V(w);
\end{equation*}

\item\label{H3} 
the set $V(S^+_W)\setminus V(S\setminus S^+_W)$ is dense in $V(S^+_W)$.
\end{enumerate}
Then $\omega (x_0) \subseteq S\setminus S^+_W$.
\end{theorem}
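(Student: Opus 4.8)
The plan is to prove $\omega(x_0)\subset S\setminus S^+_W$ by contradiction. Since \ref{A2} already gives $\omega(x_0)\subset S$, it suffices to exclude the existence of a point $z\in\omega(x_0)\cap S^+_W$. First I would record the structural facts that make the dynamics usable on the limit set: combining \ref{A1} and \ref{A2} yields $S\subset\uball(\bar y,\rho_{\bar y})\cap\dom V\subset\inti(\dom A)$, so by \cref{proposition:prop1} the operator $A$ is locally bounded at every point of $S$; in particular every $z\in\omega(x_0)\subset S\subset\dom A$ is an admissible initial condition, the solution $x(\cdot,z)$ exists, and by the invariance of the $\omega$-limit set $x(t,z)\in\omega(x_0)\subset S$ for all $t\ge 0$.

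Step 1 (Lyapunov descent). Using \ref{H1} together with the characterization of lower semicontinuous Lyapunov pairs from \cite{Adly1,BTN} and the right-derivative formula $\frac{d^+x}{dt}(s)=(f(x(s))-A(x(s)))^\circ$ recalled above, I would upgrade the infinitesimal inequality \eqref{eq3} to the integrated form
\[
V(x(t))+\int_s^t W(x(\tau))\,d\tau\le V(x(s)),\qquad 0<s\le t .
\]
Here \ref{H2} is what legitimises the chain rule: since the trajectory lies in $\dom A$ for positive times, the identity $V(x)=\liminf_{w\xrightarrow{\dom A}x}V(w)$ makes $t\mapsto V(x(t))$ compatible with the proximal/Fr\'echet subgradients of \eqref{eq3}. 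As the bounded trajectory has relatively compact closure and $V$ is lower semicontinuous, $V$ is bounded below along $x(\cdot,x_0)$, so the integrated inequality forces $t\mapsto V(x(t))$ to be non-increasing, hence convergent to some $\ell:=\lim_{t\to+\infty}V(x(t))$; moreover the decrease is strict on any time interval on which the orbit stays in $S^+_W$, where $W>0$.

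Step 2 (strict drop at a recurrent point). Assume $z\in\omega(x_0)\cap S^+_W$. Because $W$ is lower semicontinuous, $S^+_W=S\cap\{W>0\}$ is open in $S$, so there is $r>0$ with $\uball(z,r)\cap S\subset S^+_W$; by continuity of $t\mapsto x(t,z)$ and $x(t,z)\in S$, one gets $\delta>0$ with $x([0,\delta],z)\subset S^+_W$ and $\eta:=\int_0^\delta W(x(\tau,z))\,d\tau>0$. Pick $t_k\to+\infty$ with $x(t_k,x_0)\to z$. The semigroup property $x(t_k+\cdot,x_0)=x(\cdot,x(t_k,x_0))$ and continuous dependence of solutions of \eqref{E:eq1} on the initial datum give $x(t_k+\tau,x_0)\to x(\tau,z)$ uniformly on $[0,\delta]$, so lower semicontinuity of $W$ along these arcs and Fatou's lemma yield
\[
\liminf_{k\to+\infty}\int_{t_k}^{t_k+\delta}W(x(\tau,x_0))\,d\tau\ \ge\ \int_0^\delta W(x(\tau,z))\,d\tau=\eta>0 .
\]
With the integrated inequality of Step 1 this gives $\limsup_k\bigl(V(x(t_k,x_0))-V(x(t_k+\delta,x_0))\bigr)\ge\eta$, which would clash with $V(x(t,x_0))\to\ell$ provided both value-sequences share the limit $\ell$.

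Step 3 and the main obstacle. The previous step is not yet a contradiction, and this is the crux: since $V$ is merely lower semicontinuous, at the limit points $z$ and $x(\delta,z)$ one controls $V$ only from below, so $V$ need not be constant on $\omega(x_0)$ and the values $V(x(t_k,x_0))$, $V(x(t_k+\delta,x_0))$ need not both converge to $\ell$. This is exactly the pathology that \ref{H3} removes. The plan is to use the density of $V(S^+_W)\setminus V(S\setminus S^+_W)$ in $V(S^+_W)$ to select the recurrent point $z\in\omega(x_0)\cap S^+_W$ (equivalently, its level $c=V(z)$) whose value is \emph{not} attained on $S\setminus S^+_W$; for such a level \ref{H2} allows one to identify the lower limits of $V$ at $z$ and at $x(\delta,z)$ with the genuine limit $\ell$, turning the strict drop $\eta>0$ into the impossibility $\ell\le\ell-\eta$, which contradicts $z\in S^+_W$. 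I expect the hard part to be precisely this selection-of-level argument: all the analytic content (the integrated Lyapunov inequality, the Fatou estimate, and the strict decrease on the open set $S^+_W$) is comparatively routine once Step 1 is in place, whereas bridging the gap between the mere lower semicontinuity of $V$ and a genuine contradiction is where the density hypothesis \ref{H3}, coupled with the $\dom A$-relative lower semicontinuity \ref{H2}, must do the decisive work.
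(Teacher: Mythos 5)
Your overall strategy (invariance of $\omega(x_0)$ plus continuous dependence and Fatou) is genuinely different from the paper's, but as it stands it has a gap that your own Step 3 senses yet misdiagnoses, and the proposed repair via \ref{H3} does not go through.

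The first problem is in Step 1. You claim the integrated inequality forces $t\mapsto V(x(t,x_0))$ to be non-increasing and hence convergent to a limit $\ell$. But hypothesis \ref{H1} imposes the decay inequality \eqref{eq3} only on $\dom W=\bigl(\dom V\cap\uball(\bar y,\rho_{\bar y})\setminus S\bigr)\cup S^+_W$; no condition whatsoever is imposed on $S\setminus S^+_W$ (where $W=+\infty$) or outside $\dom V\cap\uball(\bar y,\rho_{\bar y})$. Since the trajectory accumulates precisely on $S$ — and the whole point is to decide which part of $S$ — you cannot rule out that it repeatedly visits regions where no Lyapunov decay is available, so $V\circ x$ need not be eventually monotone and $\ell$ need not exist. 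The paper is careful on exactly this point: it only ever invokes the integrated inequality \eqref{eq:Vx} on time intervals during which the trajectory is known to stay inside an open set $\O\subset\dom W$ containing $S^+_W$. Once $\ell$ is gone, your contradiction ``$\ell\le\ell-\eta$'' in Steps 2--3 collapses.

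The second problem is the role you assign to \ref{H3}. You propose to ``select the recurrent point $z\in\omega(x_0)\cap S^+_W$ whose value is not attained on $S\setminus S^+_W$.'' Density of $V(S^+_W)\setminus V(S\setminus S^+_W)$ in $V(S^+_W)$ only yields values \emph{arbitrarily close} to $V(z)$, not $V(z)$ itself, and $z$ is constrained to lie in $\omega(x_0)$, a set you do not get to choose; moreover you never explain how such a level would upgrade the lower-semicontinuous control of $V$ at $z$ and $x(\delta,z)$ to actual convergence of the value sequences. The paper uses \ref{H3} quite differently: combined with \ref{H2} and a compactness argument (its Step 2), it produces for every $\vp>0$ a level $c\in(V(\bar z)-\vp,V(\bar z)+\vp)$ and $\delta>0$ with $[V=c]_{\mid\ball(S,\delta)\cap C}\subset\O$. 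This level is then a one-way barrier: once $V\circ x$ drops below $c$ it cannot cross back, because crossing would happen inside $\O$ where $W>0$ forces strict decrease; the contradiction comes from the trajectory's recurrence to $\bar z$, which forces $V\circ x$ back up to at least $c$. Your outline contains none of this barrier construction, which is where \ref{H3} actually does its work. The analytic ingredients you do supply (the local integrated inequality, the strict drop on $S^+_W$, continuous dependence) are sound, but they are not the hard part.
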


\begin{remark}
Condition~\ref{H3} seems to be a little technical but it is essential for the proof of the theorem. Moreover, it gives information concerning the dynamic outside $S$ according to what happens inside $S$. 
\end{remark}

\begin{proof}[Proof of \cref{theorem:thm1}]
The proof is inspired by the one given by Dontchev, Krastanov, and Veliov \cite{DKV} and will be given by contradiction in several steps. First of all,  observe that the set $\dom W= ( \dom V \cap \ball(\bar y;\rho_{\bar y}) \setminus S)\cup S^+_W$ is open relative to $\dom V$.
	
\emph{Step 1.} By the lower semicontinuity of $W$, for each $x \in S^+_W$, there exists $r_x > 0$ such that $\ball(x;r_x) \subseteq ( \dom V \cap \ball(y;\rho_y) \setminus S)\cup S^+_W$ and where for all $y \in \ball(x;r_x)$ we have $y \in (\ball(x;r_x))^+_W$. 

Set
\begin{equation*}
\mathcal{O} := \bigcup_{x\in S^+_W} \ball(x;r_x).
\end{equation*}
By definition, $\O$ is an open set containing $S^+_W$ such that $\O \subseteq \dom W$. Suppose that 
\begin{equation}
\label{eq4}
\omega(x_0) \subseteq S\setminus S^+_W	
\end{equation} 
fails. Since $\omega(x_0) \subseteq S$, we may pick some  $\bar z \in \omega(x_0) \cap S^+_W$. By the definition of $\omega(x_0)$, there exists a bounded solution $x(\cdot;x_0)$ of \eqref{E:eq1} and a sequence $(t_k)_{k\in \mathbb{N}}$ such that $\lim_{k\to +\infty} t_k = +\infty$ and 
\begin{equation}
\label{eq5}
\lim_{k \to +\infty}x(t_k;x_0) = \bar z. 
\end{equation}
Since $x(t;x_0)$ is bounded for $t\geq 0$, then there exists a compact $C \subseteq \R^n$ such that $x(t;x_0) \in C$ for all $t\geq 0$. As $W(\bar z)>0$, we can take $\lambda$ such that $0 <\lambda <\min\{1, W(\bar z)\}$. Since $W$ is lower semicontinuous at $\bar z$, there exists $\rho>0$ such that 
\begin{equation}\label{eq:Wx}
\forall x\in \cl \ball(\bar z; \rho), \quad W(x) >\lambda.
\end{equation}
By shrinking $\rho$ if necessary, we have that $\ball(\bar z; \rho) \subseteq \O$.

\emph{Step 2.} We claim that for all $\varepsilon > 0$, there exist $c\in (V(\bar z)-\varepsilon,V(\bar z) + \varepsilon)$ and $\delta >0$ such that 
\[
[V = c]_{\mid \cball(S;\delta)\cap C} \subseteq \O. 
\]
Indeed, suppose on the contrary that for each $c\in  (V(\bar 
z)-\varepsilon,V(\bar z) + \varepsilon)$ and for every $\delta = \frac{1}{n}$ with $n\in \mathbb{N}\setminus \{0\}$, there exists $x_n \in \cball(S;\frac{1}{n})\cap C$ such that $V(x_n) = c$ and $x_n \notin \O$. By the compactness of $C$, and relabeling if necessary, we may assume that the sequence $(x_n)$ converges to $\bar{x} \in S\cap C$ with $\bar{x} \notin \O$ and therefore $\bar{x} \notin S^+_W$. In addition, according to~\ref{H2}, we have $\liminf_{x_n \to \bar{x}} V(x_n) = V(\bar{x}) = c$. This yields $c = V(\bar{x}) \in V(S\setminus S^+_W)$. Since this fact holds for every $c \in (V(\bar z)-\varepsilon,V(\bar z) + \varepsilon)$, we deduce that, for all $\varepsilon>0$, 
\begin{equation}\label{sissinette} 
(V(\bar z)-\varepsilon,V(\bar z) + \varepsilon) \subseteq V(S\setminus S^+_W) \;\text{and }\; V(\bar z) \in V(S^+_W),
\end{equation} 
which is a contradiction with assumption~\ref{H3}.

\emph{Step 3.} Let $\tau >0$ be such that $\tau\|\dot{x}\|_\infty \leq \frac{\rho}{2}$. Set $\varepsilon :=\frac{\tau\lambda}{2} >0$. By \emph{Step~2}, we find $c\in (V(\bar z)-\varepsilon,V(\bar z) + \varepsilon)$ and $\delta\in (0,\frac{\rho}{2})$ such that 
\begin{equation}\label{eq:V=c}
[V = c]_{\mid \cball(S;\delta)\cap C} \subseteq \O.
\end{equation}
By~\ref{A2}, there exists $T >0$ such that $x(t;x_0) \in \cball(S;\delta)$ for every $t\geq T$. Using \eqref{eq5} and \ref{H2}, there exists an index $i$ such that $t_{i} > T$ with $x(t_{i};x_0) \in \ball(\bar z;\delta)$ and $V(x(t_i;x_0)) -V(\bar z) <\varepsilon$. Then, for all $t\in [t_{i},t_{i}+\tau]$,
\begin{align}\label{eq6} 
\| x(t;x_0) - \bar z\|  & =  \left\| x(t_i;x_0) + \int_{t_i}^t \dot{x}(s;x_0)~ds - \bar z\right\| \\
& \leq \| x(t_i;x_0) - \bar z\| + \int_{t_i}^t \| \dot{x}(s;x_0) \|~ds \nonumber\\ 
& \leq \delta + (t - t_i) \| \dot{x} \|^{\loc}_\infty \nonumber\\
& \leq \delta + (t - t_i) \| \dot{x} \|_\infty \nonumber\\
& \leq \delta + \tau \| \dot{x} \|_\infty \nonumber\\ 
& \leq \frac{\rho}{2} +\frac{\rho}{2} =\rho. \nonumber
\end{align}
Therefore, 
\begin{equation}\label{eq:xt in ball}
\forall t\in [t_{i},t_{i}+\tau],\quad x(t;x_0) \in \ball(\bar 
z;\rho).
\end{equation}

\emph{Step 4.} Now, combining assumptions~\ref{H1}, \ref{H2} ($x(t_i;x_0)  \xrightarrow{V} x$), and the fact that the operator $A$ is locally bounded with respect to the subspace topology on  $\dom V$, we may apply \cite[Theorem~3.1]{Adly1} (or \cite[Corollary~3.14]{BTN}) to obtain that, for $t\geq 
t_i$, 
\begin{equation}\label{eq:Vx}
V(x(t;x_0)) \leq V(x(t_i;x_0)) - \int_{t_i}^{t} W(x(s;x_0))~ds.
\end{equation}
Together with \eqref{eq:Wx} and \eqref{eq:xt in ball}, we have that, for all $t\in [t_{i},t_{i}+\tau]$,
\begin{equation}\label{eq7}
V(x(t;x_0)) \leq V(x(t_i;x_0)) - (t-t_i)\lambda < V(\bar z) + 
\varepsilon - (t-t_i)\lambda.
\end{equation}
By letting $t =t_i +\tau$ and recalling $\varepsilon =\frac{\tau\lambda}{2}$,
\begin{equation*}
V(x(t_i+\tau;x_0)) < V(\bar z) + \varepsilon - \tau\lambda < c + 2\varepsilon - \tau\lambda = c.
\end{equation*}

Next, we show that
\begin{equation}\label{eq9}
\forall t\geq t_i+\tau,\quad V(x(t;x_0)) < c.
\end{equation}
Assume on the contrary the existence of some   $s > t_i +\tau$ 
such that 
\begin{equation}\label{eq8}
V(x(s;x_0)) = c \text{~~and~~} \forall t\in [t_i +\tau,s),\quad V(x(t;x_0)) < c.
\end{equation}
Since $x(t;x_0) \in C$ for all $t \geq 0$ and $x(t;x_0) \in \cball(S; \delta)$ for all $t \geq T$, it holds that 
$x(s;x_0) \in \cball(S; \delta) \cap C$. By combining this with \eqref{eq:V=c} and the equality in \eqref{eq8}, $x(s;x_0) \in \mathcal{O}$. Since $x(\cdot; x_0)$ is continuous and $ \mathcal{O}$  is open, there exists $d > 0$ such that $ s - d > t_i + \tau$ and $x(t;x_0) \in \mathcal{O}$ for all $t \in [s-d, s]$. It then follows from the definition of $\mathcal{O}$ that $W(x(t;x_0)) > 0 $ for all $t \in [s-d, s]$. According to \eqref{eq:Vx},
\begin{equation*}
V(x(s;x_0)) \leq V(x(s-d;x_0)) - \int_{s-d}^s W(x(t;x_0))~dt \leq V(x(s-d;x_0)) < c,
\end{equation*}
which contradicts the equality in \eqref{eq8}. Thus, we get \eqref{eq9}.

\emph{Step 5.} By~\eqref{eq5} and the lower semicontinuity of $V$, there exists $j$ such that $t_j > t_i + 2\tau$, $V(x(t_j;x_0)) > V(z) - \varepsilon$, and $\| x(t_j;x_0) - z\| <\delta$. Similarly to \eqref{eq6}, for each $t \in [t_j-\tau,t_j]$, we have 
\begin{align*}
\| x(t;x_0) - \bar z\|  & =  \left\| x(t_j;x_0) - \int_{t}^{t_j} \dot{x}(s;x_0)~ds - \bar z\right\| \\
& \leq \| x(t_i;x_0) - \bar z\| + \int_{t}^{t_j} \| \dot{x}(s;x_0) \|~ds \\
& \leq \delta + (t_j - t) \| \dot{x} \|^{\loc}_\infty \\
& \leq \delta + (t_j - t) \| \dot{x} \|_\infty < \rho.
\end{align*}
Thus, $x(t;x_0) \in \ball(\bar z;\rho)$ for all $t \in [t_j -\tau,t_j]$. Similarly to \eqref{eq7}, we obtain that 
\begin{align*}
V(x(t_j;x_0)) & \leq V(x(t_j-\tau;x_0)) - \int_{t_j - \tau}^{t_j} W(x(t;x_0))~dt \\ 
& \leq V(x(t_j - \tau;x_0)) - \tau\lambda. 
\end{align*}
From the latter inequality, we deduce that 
\[
V(x(t_j-\tau;x_0)) \geq V(x(t_j;x_0) + 2\varepsilon \geq (V(\bar z) - \varepsilon) + 2\varepsilon \geq c,
\]
which contradicts \eqref{eq9}, since $t_j - \tau > t_i +\tau$. The contradiction obtained is a consequence of the assumption that \eqref{eq4} is not true. 
\end{proof}

\begin{example}
\label{example:ex1}
Consider the case where $A \equiv \partial  \varphi$ for $\varphi\in\F(\R^n)$ and convex. Let $x_0 \in \dom (\partial \varphi)$ and suppose that the corresponding solution $x(\cdot;x_0)$ of \eqref{E:eq1} is bounded. If we suppose that the assumptions~\ref{H2} and \ref{H3} are satisfied and that for all $x\in ((\dom V \cap \ball(\bar y;\rho_{\bar y}))\setminus S) \cup S^+_W$, 
\[
\sup_{\zeta \in \partial_{\bullet} V(x)}\inf_{v \in \partial \varphi(x)}\langle \zeta,f(x) -v \rangle \leq -W(x),
\]
then \cref{theorem:thm1} shows that $\omega(x_0)\subseteq S\setminus S^+_W$.
\end{example}

\begin{remark}
\label{remark:rem1}
The result of \cref{theorem:thm1} holds under different assumptions. Thus, a series of specific results can be derived. For instance,
\begin{enumerate}
\item 
assumption~\ref{H1} can be replaced (see \cite{Adly1}) by the condition that for all $x\in ((\dom V \cap \ball(\bar y; \rho_{\bar y}) )\setminus S)\cup S^+_W$, we have  
\[
\inf_{v \in A(x)} V'(x,f(x)-v) \leq W(x),
\]
where $V'(x,f(x)-v)$ is the \emph{contingent directional derivative} of $V$ at $x\in \dom V$ in the direction $v$ and given by  
\[
V'(x,v) = \liminf_{t\to 0^+,w\to v} \frac{f(x+tw) - f(x)}{t}; 
\]	
\item 
an interesting and important case is when the function $V$ is also convex, defined on $\dom V \cap \dom A$ and where $\dom V$ is open.  In this case, $V$ becomes locally Lipschitz on the interior of its domain (in fact, on its domain). Then, combining the proof of \cref{theorem:thm1} and the proof in \cite{DKV}, we are able to prove the result which can be seen, in some way, as a particular case of the result given in \cite{DKV}.
	  
Moreover, if $\dom V$ is contained in $\dom A$, then 
assumption~\ref{H2} is naturally satisfied and assumption~\ref{H1} becomes for every $x\in (\dom V\setminus S) \cup S^+_W,$ we have  
\begin{equation*}
\sup_{\zeta \in \partial_{\bullet} V(x)} \langle\zeta,(f(x)-
Ax)^\circ \rangle\leq -W(x),
\end{equation*}
where $(f(x)-Ax)^\circ = P_{f(x) - A(x)}(0)  =  f(x) - P_{A(x)}(f(x))$ is the element of minimal norm in $f(x) - A(x)$;
\item  
observe that, the lower semicontinuity of $W$ can be replaced by  its Lipschitz continuity, since every lower semicontinuous function can be regularized by a sequence of Lipschitz functions on every bounded subset of $\R^n$ (see, e.g., \cite{clarke2}). Thus, for $W \in \F^+(\R^n)$, using the well-known quadratic Inf-convolution, there exists a sequence of Lipschitz functions $(W_k)$ that converges pointwise to $W$ and  such that for each $k$ and each $y \in \R^n$, we have that
\[
W_k (y) >0 \text{~~if and only if~~} W(y) >0.
\]
\end{enumerate}
\end{remark}

\begin{remark}
\label{remark:rem0}
Consider the differential inclusion
\begin{equation}\label{cusco}
\dot{x}(t) \in F(x(t)) - A(x(t)) \text{~~a.e.~~} t\in [0,+\infty),\quad x(0) = x_0 \in \cl(\dom A), 
\end{equation}
where $F: \R^n \rightrightarrows \R^n$ is a multifunction and $A$ is a maximally monotone operator. We distinguish here two different cases:
\begin{enumerate}
\item 
If $F$ is of type cusco and if $A \equiv 0$, then \eqref{cusco} is reduced to the standard differential inclusion. In addition, if $F$ is bounded in a neighborhood of the initial condition, then the results in \cite{DKV} are covered by taking $V$ and $W$ lower semicontinuous and not only locally Lipschitz.   
\item
If $F$ is Lipschitz cusco, then thanks to a selection theorem given in \cite{Artstein}, Adly, Hantoute, and Nguyen \cite{Adly-Hantoute-Bao} rewrote \eqref{cusco} as \eqref{E:eq1} and proved that \eqref{cusco} has at least one solution.
\end{enumerate}
\end{remark}

An immediate special case of \eqref{E:eq1} is when $A$ is the null operator, and thus, problem \eqref{E:eq1} reduces to \eqref{eq2}. In the following corollary, we provide a generalized version of the result given in \cite{Arsie1}, not only when the used Lyapunov function is continuously differentiable but also when it is lower semicontinuous.
  
\begin{corollary}
\label{cor1}
Let $V \in \F(\R^n)$ be defined on an open neighborhood $\O$ of $S$ and let assumptions~\ref{H2} and \ref{H3} be fulfilled. For $S^+_W = \{x\in S:\,  \sup_{\zeta \in \partial_{\bullet} V(x)}\langle \zeta,f(x)\rangle< 0 \}$, we have $\omega (x_0) \subseteq S\setminus S^+_W$. Furthermore, the $\omega$-limit set $\omega (x_0)$ is contained in a connected component of $S\setminus S^+_W$. In particular, the same result can also be obtained if $V\in C^1(\O,\R)$.
\end{corollary}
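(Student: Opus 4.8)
The plan is to derive \cref{cor1} from \cref{theorem:thm1} by specialising to the null operator, to obtain the connectedness refinement from the known topology of $\omega(x_0)$, and finally to read off the $C^1$ case.

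First I would observe that with $A \equiv 0$ the inclusion \eqref{E:eq1} reduces to \eqref{eq2} and $\dom A = \inti(\dom A) = \R^n$. Hence the blanket assumption \ref{A1} holds trivially, since for every $\bar y$ and every radius $\rho_{\bar y}$ one has $\uball(\bar y, \rho_{\bar y}) \cap \dom V \subset \R^n = \inti(\dom A)$, and $A$ is vacuously locally bounded everywhere, so the local boundedness invoked in Step~4 of the proof of \cref{theorem:thm1} is automatic. I would then put
\[
W(x) := -\sup_{\zeta \in \partial_{\bullet} V(x)} \langle \zeta, f(x)\rangle
\]
on $\big(\dom V \cap \uball(\bar y, \rho_{\bar y}) \setminus S\big) \cup S^+_W$ and $W(x) := +\infty$ elsewhere, which matches the third blanket assumption defining $\dom W$. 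Because $A(x) = \{0\}$ gives $\inf_{v \in A(x)} \langle \zeta, f(x) - v\rangle = \langle \zeta, f(x)\rangle$, hypothesis \ref{H1} then holds with equality, while $\{x \in S : W(x) > 0\}$ is precisely the set $S^+_W = \{x \in S : \sup_{\zeta \in \partial_{\bullet} V(x)} \langle \zeta, f(x)\rangle < 0\}$ of the statement. With \ref{H2} and \ref{H3} assumed, \cref{theorem:thm1} applies and delivers $\omega(x_0) \subset S \setminus S^+_W$.

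For the refinement I would use the fact, recorded just after the definition of $\omega(x_0)$, that for a bounded solution $\omega(x_0)$ is nonempty, compact and \emph{connected}. Since a connected subset of a topological space always lies in a single connected component, viewing the connected set $\omega(x_0)$ inside $S \setminus S^+_W$ (with its subspace topology) shows it is contained in exactly one connected component of $S \setminus S^+_W$, as claimed. For $V \in C^1(\O, \R)$ I would note $\partial_{\bullet} V(x) = \{\nabla V(x)\}$, so the supremum collapses to $\langle \nabla V(x), f(x)\rangle = \dot V(x)$ and $S \setminus S^+_W = \{x \in S : \dot V(x) \geq 0\}$; continuity of $V$ makes \ref{H2} automatic, and the argument above reproduces (and, when $\dot V \le 0$ on $S$, exactly recovers) the location result of \cite{Arsie1}.

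The step I expect to be the main obstacle is regularity: the $W$ defined above need not be lower semicontinuous, so to legitimately apply \cref{theorem:thm1} I would either establish the required semicontinuity of $W$ at points of $S^+_W$ directly from the behaviour of $\partial_{\bullet} V$, or, more robustly, replace $W$ by a sign-preserving Lipschitz regularisation in the spirit of \cref{remark:rem1}(iii); such a regulariser $W_k \le W$ still validates \ref{H1} (as $\sup_\zeta \langle\zeta,f(x)\rangle = -W(x) \le -W_k(x)$) and leaves $S^+_W$ unchanged. A secondary point is that \ref{A2} forces $S$ to sit inside a ball: since the orbit is bounded, intersecting $S$ with a large closed ball enclosing it yields a compact set still containing $\omega(x_0)$, and this only shrinks $S^+_W$, so no generality is lost.
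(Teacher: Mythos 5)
Your proposal is correct and follows essentially the same route as the paper: the paper likewise deduces the result by applying the location theorem of Section~\ref{sec3} with $W(x):=\inf_{\zeta\in-\partial_{\bullet}V(x)}\langle\zeta,f(x)\rangle=-\sup_{\zeta\in\partial_{\bullet}V(x)}\langle\zeta,f(x)\rangle$ (so that \ref{H1} holds with equality when $A\equiv 0$), obtains the component statement from the connectedness of $\omega(x_0)$, and settles the $C^{1}$ case by taking $W(x):=-\langle\nabla V(x),f(x)\rangle$ with \ref{H2} automatic. Your added attention to the lower semicontinuity of $W$ and to arranging the blanket assumptions \ref{A1}--\ref{A2} is a welcome extra precaution that the paper's one-line proof passes over in silence.
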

\begin{proof} 
The proof can be easily deduced by applying \cref{theorem:thm2} for 
\[
W(x):=\inf_{\zeta \in -\partial_{\bullet} V(x)} \langle \zeta,f(x)\rangle.
\] 
Moreover, since the $\omega$-limit set is connected, then it is contained in a connected component of $S\setminus S^+_W$.

Now, for $V \in C^1 (\O,\R)$, condition~\ref{H2} is fulfilled and the proof is completed by taking $W(x):= -\langle\nabla V(x), f(x) \rangle$ for all $x\in \O$.  
\end{proof}

\begin{remark}
\label{remark:rem2}
\cref{cor1} was also interpreted in \cite{DKV} for $V$ continuously differentiable and where \eqref{eq2} is viewed as a particular case of the standard differential inclusion problem. In our case, \eqref{eq2} is naturally covered when $A$ is the null operator. Moreover, always in the settings of \eqref{eq2}, it is important to mention that if $S$ is an invariant set, LaSalle's theorem (see \cref{theorem:thm0}) gives us the best location of the $\omega$-limit set. This latter case will be the aim of the next section. 
\end{remark}

\section{Location via invariant sets}
\label{sec4} 

Always in the spirit of the purpose of this paper, the aim of this section is to give further results about the location of the $\omega$-limit set. Indeed, if, in addition, the set $S$ is supposed to be invariant, we are willing to provide a stronger version  (than the one given in the standard LaSalle's invariance principle previously stated in \cref{theorem:thm0}) about the location of the $\omega$-limit set. To do so, we first propose a generalization of the LaSalle invariance principle for problem \eqref{E:eq1}. However, it will allow us to give outer estimates of the $\omega$-limit set in terms of the invariant set and in terms of nonsmooth Lyapunov-like functions.

\begin{theorem}
\label{theorem:thm2}
Let $S$ be a compact invariant set with respect to \eqref{E:eq1} contained in $\cl(\dom A)$. Suppose that $\inte(\co (\dom A)) \neq \varnothing$ and there exists a function $V \in \F(\R^n)$ such that  
\begin{equation}\label{eq10}
\forall x\in S,\quad \sup_{\zeta \in \partial_{\bullet} V(x)}\inf_{v \in A(x)}\langle \zeta,f(x) -v \rangle \leq 0,
\end{equation} 
where $\partial_{\bullet}$ stands for $\partial_P$ or $\partial_F$. For $\alpha \in \R,$ let $\mathfrak{M}_\alpha$ be the largest invariant set contained in $\bigcap_{\alpha < \beta} \cl \big([\alpha \leq V \leq \beta]_{\mid S}\big)$. Then, for each $x_0 \in S$,  
\begin{enumerate}
\item\label{inclusion}
there exists $\alpha \leq V(x_0)$ such that $\omega (x_0) \subseteq \mathfrak{M}_\alpha$;
\item\label{limit}
$\lim_{t \to +\infty} d\big(x(t;x_0); \bigcup_{\alpha \in \R}\mathfrak{M}_\alpha\big) = 0$.
\end{enumerate}
\end{theorem}
\begin{proof}
Let $x_0 \in S$ and denote by $x(t;x_0)$ the solution of \eqref{E:eq1}. Since the function $V$ is lower semicontinuous on the compact set $S$, it attains its minimum on $S$. Thus, there exists a point $\bar{x} \in S$ such that $V(x)$ is bounded below by $V(\bar{x})$ for every $x \in S$. In addition, according to \cite[Theorem~3.1]{Adly1} (or \cite[Corollary~3.14]{BTN}), condition~\eqref{eq10} implies that this function decreases along the solution of \eqref{E:eq1}. Therefore, $V(x(t;x_0)) \leq V(x(s;x_0))$ for all $0\leq s\leq t$, and $V(x(t;x_0))$ has a limit $\alpha$ as $t$ tends to $+\infty$.

For any $z \in \omega (x_0)$, there exists a sequence $(t_k)_{k\in \mathbb{N}}$ tending to $+\infty$ with $k\to +\infty$ such that 
\begin{equation}\label{eq12}
\lim_{k \to +\infty} x(t_k;x_0) = z. 
\end{equation}
Thus, since $V(x(t_k;x_0))$ is nonincreasing and $S$ is invariant, we have that, for all $j \geq 0$ and all $k \geq j$, 
\[
x(t_k;x_0) \in  [\alpha \leq V \leq V(x(t_j;x_0)]_{\mid S}.
\]
Using \eqref{eq12}, we get $z \in \cl \big([\alpha \leq V \leq V(x(t_k;x_0)]_{\mid S}\big)$ for $k \geq 0$. Therefore, for every $\beta > \alpha$, there exists $k \geq 0$ such that $x(t_k;x_0) \in [\alpha \leq V \leq \beta]_{\mid S}$ and we deduce that $z \in \cl \big([\alpha \leq V \leq \beta]_{\mid S}\big)$. Hence, 
\[
z \in  \bigcap_{\alpha < \beta} \cl \big([\alpha \leq V \leq \beta]_{\mid S}\big),
\]
which implies that  $\omega (x_0)$ is contained in $\bigcap_{\alpha < \beta} \cl \big([\alpha \leq V \leq \beta]_{\mid S}\big)$. In addition, since $\omega (x_0)$ is invariant, we derive that $\omega (x_0)$ is contained in the largest invariant set  $\mathfrak{M}_\alpha$ of $\bigcap_{\alpha < \beta} \cl \big([\alpha \leq V \leq \beta]_{\mid S}\big)$. Hence, for all $x_0 \in S$, $\omega (x_0) \subseteq \mathfrak{M}_\alpha \subseteq \bigcup_{\alpha \in \R} \mathfrak{M}_\alpha$. Since $\lim_{t \to +\infty} d(x(t;x_0),\omega (x_0)) = 0$, we deduce that \ref{limit} is verified.   
\end{proof}

\begin{remark}
\label{remark:rem3}
In the case where the condition $\inte(\co (\dom A)) \neq \varnothing$ or the local boundedness of $A$ is not assumed, we need to consider the horizon subdifferential of $V$. In this case, condition~\eqref{eq10} is replaced by
\[
\forall x\in S,\quad \sup_{\zeta \in \partial_{P} V(x) \cup \partial_{\infty} V(x)}\inf_{v \in A(x)}\langle \zeta,f(x) -v \rangle \leq 0.
\]
For further information, we invite the readers to check \cite{Adly1} and/or \cite{Adly3}.
\end{remark}

\begin{remark}
\label{remark:rem4}
Note that \cref{theorem:thm2}\ref{inclusion} can also be found in \cite{Haddad} in the context of ordinary differential equations. Moreover, in the setting of \eqref{eq2}, \cref{theorem:thm0} can be easily deduced from \cref{theorem:thm2} (see \cite{Haddad}). Indeed, \cref{theorem:thm2}\ref{inclusion} and the continuity of $V$ insure the existence, for $x_0 \in S$, of an $\alpha \in \R$ such that $\omega (x_0) \subseteq \mathfrak{M}_\alpha$, where $\mathfrak{M}_\alpha$ is the largest invariant set contained in $[V = \alpha]_{\mid S}$ which proves that $\omega (x_0) \subseteq [V = \alpha]_{\mid S}$. Furthermore, it is easy to verify that the invariant set $\mathfrak{M}_\alpha$ is contained in the largest invariant set contained in $E$ in order to deduce the result of \cref{theorem:thm0}. Therefore, this allows us to give a more accurate location of the $\omega$-limit set, compared to the one proposed by the standard LaSalle's theorem, since 
\[
\omega (x_0) \subseteq \mathfrak{M}_\alpha \subseteq M \subseteq E \subseteq S.
\] 
Finally, in the case where the function $V$ is lower semicontinuous, it suffices to replace condition \eqref{eq10} by 
\begin{equation*}
\forall x\in S,\quad \sup_{\zeta \in \partial_{\bullet} V(x)}\langle \zeta,f(x) \rangle \leq 0.
\end{equation*}
\end{remark}

In the remainder of this section, we propose two corollaries of \cref{theorem:thm2} for the particular but important case where $A \equiv \partial \varphi$ with $\varphi \in \F(\R^n)$ and convex.  Note that in this case, the operator $\partial \varphi$ is not necessarily locally bounded on $\dom V$ (in particular on $S$), thus the condition $\inte(\co (\dom A)) \neq \varnothing$ is not satisfied. Therefore, according to \cref{remark:rem3}, the use of the horizon subdifferential of $V$ is required in order to fill the gap. In the first corollary, we consider the case where $V\in \F(\R^n)$, while the second deals with the case where $V \in C^1$.
  
\begin{corollary}
\label{corollary:cor4} 
Let $S$ be a compact invariant set contained in $\cl (\dom \varphi)$. Suppose that there exists a function $V \in \F(\R^n)$ such that, for all $x\in S$,
\begin{equation}\label{eq13}
\sup_{\zeta \in \partial_{P} V(x) \cup \partial_{\infty} V(x)}\inf_{v \in \partial \varphi(x)}\langle \zeta,f(x) -v \rangle \leq 0. 
\end{equation}
Then the conclusions of \cref{theorem:thm2} hold.
\end{corollary}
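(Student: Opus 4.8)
The plan is to treat \cref{corollary:cor4} as the specialization of \cref{theorem:thm2} to the case $A \equiv \partial\varphi$: once the descent of $V$ along trajectories is secured, the localization argument of \cref{theorem:thm2} transfers verbatim. The only genuinely new point is that $\partial\varphi$ need not be locally bounded on $S$ and $\inti(\co(\dom\partial\varphi))$ may be empty, so the hypothesis \eqref{eq10} used in \cref{theorem:thm2} is unavailable. This is precisely the situation flagged in \cref{remark:rem3}, and the remedy is to run the same scheme with the horizon-subdifferential hypothesis \eqref{eq13} in place of \eqref{eq10}.

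First I would note that $V$, being lower semicontinuous on the compact invariant set $S$, attains its infimum there and is therefore bounded below along every solution issuing from $S$. The crucial step is then to upgrade \eqref{eq13} to the monotonicity statement $V(x(t,x_0)) \le V(x(s,x_0))$ for all $0 \le s \le t$ and all $x_0 \in S$. Here I would invoke the Lyapunov-pair characterization of \cite[Theorem~3.1]{Adly1} (equivalently \cite[Corollary~3.14]{BTN}) in the form that accommodates the horizon subdifferential: because local boundedness of $\partial\varphi$ fails, the proximal estimate must be supplemented by the recession directions collected in $\partial_\infty V(x)$, and \eqref{eq13} is exactly the hypothesis those results require in order that $(V,0)$ be a Lyapunov pair for \eqref{E:eq3bis}. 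Consequently $t \mapsto V(x(t,x_0))$ is nonincreasing and, being bounded below, converges to some $\alpha \le V(x_0)$ as $t \to +\infty$.

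With the limit $\alpha$ in hand, the remainder reproduces the proof of \cref{theorem:thm2} without change. For an arbitrary $z \in \omega(x_0)$ I would pick $t_k \to +\infty$ with $x(t_k,x_0) \to z$; the monotonicity of $V$ together with the invariance of $S$ places each tail $x(t_k,x_0)$ in the sublevel set $[\alpha \le V \le V(x(t_n,x_0))]_{\mid S}$, and passing to the limit yields $z \in \cl([\alpha \le V \le \beta]_{\mid S})$ for every $\beta > \alpha$. Hence $z \in \bigcap_{\alpha < \beta} \cl([\alpha \le V \le \beta]_{\mid S})$, and since $\omega(x_0)$ is itself invariant it must lie in the largest invariant subset $\mathfrak{M}_\alpha$ of that intersection. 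Combining this with $d(x(t,x_0);\omega(x_0)) \to 0$ delivers the asserted convergence of $x(t,x_0)$ to $\bigcup_{\alpha \in \R}\mathfrak{M}_\alpha$.

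The hard part will be the middle step: justifying the descent inequality under \eqref{eq13} when $\partial\varphi$ is not locally bounded near $\bd(\dom\varphi)$. One must verify that the hypotheses of the cited Lyapunov-pair theorems are genuinely met in this nonsmooth, non-locally-bounded regime — in particular that enlarging the tested subgradient set to $\partial_P V(x) \cup \partial_\infty V(x)$ compensates exactly for the possible unboundedness of $\partial\varphi(x)$ — after which the topological localization is routine.
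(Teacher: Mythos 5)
Your proposal is correct and follows essentially the same route as the paper, whose proof is simply to combine \cref{theorem:thm2} with an external Lyapunov-pair characterization that yields the descent of $V$ along trajectories under the horizon-subdifferential condition \eqref{eq13}. The one discrepancy is the citation for that descent step: the result covering the regime where $\partial\varphi$ is not locally bounded is Theorem~3.3 of \cite{Adly3}, not \cite[Theorem~3.1]{Adly1} (or \cite[Corollary~3.14]{BTN}), which is the nonempty-interior-domain, locally bounded version --- precisely the hypothesis you correctly observe is unavailable here.
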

\begin{proof}
The result is a direct consequence of \cref{theorem:thm2} and \cite[Theorem~3.3]{Adly3}.	
\end{proof}

\begin{corollary}
\label{corollary:cor5} 
Under the assumptions of \cref{corollary:cor4}, suppose that there exists a continuously differentiable function $V$ such that, for all $x\in S$,  
\begin{equation}\label{eq14}
\max_{v \in -\partial  \varphi(x)}\langle \nabla V(x),f(x)+v \rangle \leq 0. 
\end{equation} 
Let $\mathfrak{E}_S := \{x\in S:\, \langle \nabla V(x),f(x)+v \rangle = 0 \text{~for some~} v \in -\partial \varphi(x)\}$ and let $\mathfrak{M}_S$ be the largest invariant set of $\mathfrak{E}_S$. Then, for each $x_0 \in S$, 
\begin{equation}\label{eq15}
\lim_{t \to +\infty} d(x(t;x_0),\mathfrak{M}_S) = 0.
\end{equation}
\end{corollary}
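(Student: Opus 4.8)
The plan is to obtain \cref{corollary:cor5} as the $C^1$ specialization of \cref{corollary:cor4}, in exactly the way the classical LaSalle principle refines the level-set estimate of \cref{theorem:thm2}. Concretely, I would first verify that \eqref{eq14} implies the hypothesis \eqref{eq13}, so that \cref{corollary:cor4} yields convergence to $\bigcup_{\alpha\in\R}\mathfrak{M}_\alpha$; then I would identify each $\mathfrak{M}_\alpha$ as the largest invariant subset of a level surface of $V$, and finally show that every such level-surface invariant set is contained in $\mathfrak{E}_S$, hence in $\mathfrak{M}_S$.

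First I would check the reduction of hypotheses. Since $V$ is continuously differentiable, it is Lipschitz near every point, so $\partial_{\infty} V(x)=\{0\}$ and $\partial_P V(x)\subseteq\{\nabla V(x)\}$ for every $x$. Thus $\partial_P V(x)\cup\partial_{\infty}V(x)\subseteq\{0,\nabla V(x)\}$, and the choice $\zeta=0$ contributes the value $\inf_{v\in\partial\varphi(x)}\langle 0,f(x)-v\rangle=0$. Rewriting \eqref{eq14} as $\sup_{u\in\partial\varphi(x)}\langle\nabla V(x),f(x)-u\rangle\le 0$ gives in particular $\inf_{u\in\partial\varphi(x)}\langle\nabla V(x),f(x)-u\rangle\le 0$, so the supremum over $\zeta$ in \eqref{eq13} equals $0$ and \eqref{eq13} holds. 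Hence \cref{corollary:cor4} applies and gives $\lim_{t\to+\infty}d\big(x(t,x_0);\bigcup_{\alpha\in\R}\mathfrak{M}_\alpha\big)=0$, where $\mathfrak{M}_\alpha$ is the largest invariant set in $\bigcap_{\alpha<\beta}\cl\big([\alpha\le V\le\beta]_{\mid S}\big)$.

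Next I would simplify $\mathfrak{M}_\alpha$. As $V$ is continuous and $S$ compact, each $[\alpha\le V\le\beta]_{\mid S}$ is already closed, so $\bigcap_{\alpha<\beta}\cl\big([\alpha\le V\le\beta]_{\mid S}\big)=[V=\alpha]_{\mid S}$; thus $\mathfrak{M}_\alpha$ is the largest invariant set contained in the level surface $[V=\alpha]_{\mid S}$, as already observed in \cref{remark:rem4}. The crux is then the inclusion $\mathfrak{M}_\alpha\subseteq\mathfrak{E}_S$. Let $p\in\mathfrak{M}_\alpha\cap\dom\partial\varphi$. By invariance the whole forward orbit $x(\cdot,p)$ stays in $\mathfrak{M}_\alpha\subseteq[V=\alpha]_{\mid S}$, so $t\mapsto V(x(t,p))\equiv\alpha$ is constant. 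Differentiating from the right at $t=0$ and using the right-derivative formula for \eqref{E:eq1}, namely $\tfrac{d^+ x}{dt}(0)=f(p)-\proj_{\partial\varphi(p)}(f(p))=(f(p)-\partial\varphi(p))^\circ$, the chain rule for the $C^1$ function $V$ yields $0=\langle\nabla V(p),f(p)-u_p\rangle$ with $u_p:=\proj_{\partial\varphi(p)}(f(p))\in\partial\varphi(p)$. Taking $v_p:=-u_p\in-\partial\varphi(p)$ gives $\langle\nabla V(p),f(p)+v_p\rangle=0$, i.e. $p\in\mathfrak{E}_S$.

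Finally, since $\mathfrak{M}_\alpha$ is invariant and every point of $\mathfrak{M}_\alpha\cap\dom\partial\varphi$ lies in $\mathfrak{E}_S$, one gets $\mathfrak{M}_\alpha\subseteq\mathfrak{M}_S$, whence $\bigcup_{\alpha\in\R}\mathfrak{M}_\alpha\subseteq\mathfrak{M}_S$; monotonicity of the distance then gives $d\big(x(t,x_0);\mathfrak{M}_S\big)\le d\big(x(t,x_0);\bigcup_{\alpha\in\R}\mathfrak{M}_\alpha\big)\to 0$, which is \eqref{eq15}. The step I expect to be the main obstacle is precisely the inclusion $\mathfrak{M}_\alpha\subseteq\mathfrak{E}_S$: because $\mathfrak{E}_S$ forces $\partial\varphi(x)\neq\varnothing$, one must rule out that points of the invariant set $\mathfrak{M}_\alpha$ lying in $\cl(\dom\varphi)\setminus\dom\partial\varphi$ spoil the conclusion. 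I would handle this by invoking the instantaneous regularization $x(t,p)\in\dom\partial\varphi$ for all $t>0$, so that $x(t,p)\in\mathfrak{E}_S$ for $t>0$, and then use forward invariance of the flow together with the estimate $d(x(t,x_0);\omega(x_0))\to 0$ to show that such boundary points are irrelevant to the distance estimate \eqref{eq15}.
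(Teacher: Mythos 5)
Your proposal follows essentially the same route as the paper: show $V$ is nonincreasing along trajectories, reduce to the level-set invariant sets $\mathfrak{M}_\alpha$ of \cref{theorem:thm2} (via \cref{remark:rem4}), then use the chain rule along orbits that remain in a level surface of the $C^1$ function $V$ to place $\mathfrak{M}_\alpha$ inside $\mathfrak{E}_S$ and hence inside $\mathfrak{M}_S$. The only differences are bookkeeping ones in your favour: you verify explicitly that \eqref{eq14} implies \eqref{eq13}, and you flag --- and plausibly patch via the regularizing property $x(t,p)\in\dom \partial\varphi$ for $t>0$ --- the point, glossed over in the paper's own proof, that membership in $\mathfrak{E}_S$ implicitly requires $\partial\varphi(x)\neq\varnothing$.
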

\begin{proof}
Using \eqref{eq14}, we have 
\begin{align*}
\dot{V}(x(t;x_0)) & = \langle\nabla V(x(t;x_0)),\dot{x}(t;x_0) \rangle \\ 
& \leq \max_{v \in -\partial \varphi(x(t;x_0))}\langle \nabla V(x(t;x_0)),f(x(t;x_0))+v \rangle \\
& \leq 0 \text{~~~~a.e.~~} t\in [0,+\infty),
\end{align*}	
which means that $V$ is decreasing along the trajectories. Now, following the same arguments discussed in \cref{remark:rem4}, we can prove the existence of an $\alpha \in \R$ such that $\omega (x_0) \subseteq \mathfrak{M}_\alpha$, where $\mathfrak{M}_\alpha$ is the largest invariant set contained in $[V =\alpha]_{\mid S}$. Moreover, 
\[
\frac{dV(x(t))}{dt}\mid_{t = 0} = \langle\nabla V(x(t)),\dot{x}(t) \rangle\mid_{t=0} = 0, 
\]  
which proves that $\mathfrak{M}_\alpha$ is contained in $\mathfrak{M}_S$. Therefore, since $x(t;x_0)$ approaches $\omega (x_0) \subseteq \mathfrak{M}_\alpha \subseteq \mathfrak{M}_S$, \eqref{eq15} is deduced.    
\end{proof}

\begin{remark}
\label{remark:rem5} 
Some comments on the two corollaries above are in order.
\begin{enumerate}
\item 
\cref{corollary:cor4,corollary:cor5} can be considered, respectively, as a generalization and a refinement of LaSalle's invariance theorem given in \cite{Qi}. More precisely, \cref{corollary:cor5} provides a better location of the $\omega$-limit set in terms of the invariant sets. In \cite{Qi}, the author shows that the $\omega$-limit set is also contained in the largest invariant set $\mathfrak{M}$ of the set $\mathfrak{E} := \{x\in S:\, \langle f(x), \nabla V(x)\rangle + \varphi(x) - \varphi (x - \nabla V(x)) = 0\}$, which is contained in $\mathfrak{M}_S$. Meanwhile, \cref{corollary:cor5} gives the following location of the $\omega$-limit set: 
\[
\omega(x_0) \subseteq \mathfrak{M}_\alpha \subseteq  \mathfrak{M} \subseteq \mathfrak{M}_S \subseteq S.
\]
\item 
Let $\mathfrak{P}$ be the largest invariant subset of the invariant set $\{y : V(y) \leq V(x_0)\}$ containing $\omega (x_0)$ (see \cite{Adlybook}). Then, according to the proofs of \cref{theorem:thm2} and \cref{corollary:cor5}, we can easily provide a sharper location of $\omega(x_0)$ given as follows:
\[
\omega (x_0) \subseteq \mathfrak{P} \cap \mathfrak{M}_\alpha. 
\]    
\end{enumerate}
\end{remark}

We conclude this paper by giving an example of  an important second-order gradient-like dissipative dynamical system with Hessian-driven damping, called the \emph{dynamical inertial Newton-like} (DIN) system by Alvarez et al.~\cite{AABR02}\footnote{We would like to thank one of the referees for   pointing our attention to this reference.}. These authors observed  that this system can be equivalently written as a  Lipschitz perturbation of  a gradient system, thus reaching the setting of our  paper.  We refer the readers to \cite{HJ15} for more concrete examples concerning dynamical systems and optimization.
\begin{example}[DIN system revisited]
\label{example:ex2} 
Let $\Phi : \R^n \to \R$ be a twice differentiable function, bounded from below, whose Hessian  $\nabla^2 \Phi$ is Lipschitz continuous on bounded subsets of $\R^n$. We consider the second-order dynamical system
\begin{equation}
\label{eq16}
\ddot{x}(t) + \alpha \dot{x}(t) +\nabla \Phi (x(t))  + \beta \nabla^2 \Phi (x(t)) \dot{x}(t) = 0,
\end{equation}
where $\alpha$ and $\beta$ are strictly positive parameters. This system plays an important role in mechanics, optimization, and control theory. In \cite{AABR02}, the authors established the existence and uniqueness of the solution of \eqref{eq16} satisfying the initial conditions $x(0) =x_0$ and $\dot{x}(0)=\dot{x}_0$. Using the {\L}ojasiewicz inequality, they showed that each trajectory of \eqref{eq16} converges to a critical point of $\Phi$ when $\Phi$ is analytic. They also proved the convergence of the trajectory to a minimizer of $\Phi$ when $\Phi$ is convex and $\argmin \Phi$ is nonempty. 

The aim of this example is to apply our theoretical results to the system given by \eqref{eq16} to obtain a convergence result and to provide a localization of $\omega(x_0)$. First, \eqref{eq16} can be equivalently written, in the phase space $\R^n \times \R^n,$ as a first-order Cauchy--Lipschitz problem of the form
\begin{equation}
\label{eq17}
\dot{y}(t) =  f(y(t)), 	
\end{equation}
where $y = \begin{bmatrix} x \\ \dot{x} \end{bmatrix}$ and $f(y_1,y_2) = \begin{bmatrix} y_{2} \\ -\alpha y_2 - \nabla\Phi (y_1)  - \beta \nabla^2 \Phi (y_1)y_2 \end{bmatrix}$. 
   
Let $\mathcal{S} =\{(y_1,y_2)\in \R^n\times \R^n:\, f(y_1,y_2) =0\}$ be the set of \emph{equilibrium points} of \eqref{eq17}. Then $\mathcal{S} = \mathcal{N} \times \{0\}$, where $\mathcal{N} = \{x \in \R^n:\, \nabla \Phi (x) =0 \}$ is the set of \emph{critical points} of $\Phi$. Next, we denote by $y(t;y_0)$ the solution of \eqref{eq17} starting at $y_0 = y(0) \in \R^n \times \R^n$. For all $y_0 =(x_0,0) \in \mathcal{S}$, we have that, for all  $t \geq 0$, $y(t;y_0) = y_0 \in \mathcal{S}$. So, the set $\mathcal{S} $ is invariant with respect to \eqref{eq17}. Moreover, consider the function $V$ defined by
\[
V(y(t)) = (\alpha \beta +1) \Phi (x(t)) + \frac{1}{2} \| \dot{x}(t) + \beta \nabla \Phi(x(t)) \|^2. 
\]
We see that the derivative of $V$ along the solution of \eqref{eq17} satisfies
\begin{eqnarray*}
\dot{V}(y(t)) & = & \Big\langle \nabla V(x(t)),\dot{x}(t) \Big\rangle  \\
& = &  -\alpha \| \dot{x}(t)\|^2 - \beta \| \nabla \Phi (x(t)) \| ^2 \leq 0.
\end{eqnarray*}
It follows that the set $\mathcal{S}$ coincides with the set of solutions where the derivative of $V$ along the solution vanishes. 

By applying \cref{corollary:cor5} and the consequences discussed in \cref{remark:rem4,remark:rem5}, we derive that 
\[
\lim_{t \to +\infty} d(y(t),\mathcal{S}) = 0,
\]
which yields
\begin{equation}
\label{eq18}
\lim_{t \to +\infty} d(x(t),\mathcal{N}) = 0 \text{~~and~~} \lim_{t \to +\infty} \dot{x} (t) = 0. 
\end{equation}
This proves that the set $\mathcal{N}$ of critical points of $\Phi$ is attractive. Moreover, by \cref{theorem:thm2} and \cref{remark:rem4}, there exists $\alpha \in \R$ such that the $\omega$-limit set $\omega(x_0)$ is contained in $\mathfrak{M}_\alpha \subseteq \mathcal{N}$, where $\mathfrak{M}_\alpha$ is the largest invariant set contained in $[V =\alpha]$.
\end{example}

\section{Conclusion}

We have extended the work done by Dontchev, Krastanov, and Veliov \cite{DKV} to differential inclusions governed by a maximally monotone operator by justifying it mathematically using techniques from nonsmooth and variational analysis. In addition, we have generalized the LaSalle's invariance principle under nonsmooth data. Compared to the standard invariance principle, our result gives a more accurate location of the $\omega$-limit set. A concrete example is discussed to illustrate the application of our results to the situations useful for optimization. Finally, we hope that it will succeed in stimulating enough interest in the community for applying our results to some new problems arising in possible applications.

\section*{Acknowledgment}
 
The authors would like to thank the anonymous referees for constructive comments and suggestions that helped to improve the manuscript.


\begin{thebibliography}{10}
\setlength{\itemsep}{-1pt}

\bibitem{Adlybook}
{\sc S.~Adly}, {\em A Variational Approach to Nonsmooth Dynamics.
  {A}pplications in Unilateral Mechanics and Electronics}, Springer, Cham,
  2017.

\bibitem{Adly-Goeleven}
{\sc S.~Adly and D.~Goeleven}, {\em A stability theory for second-order
  nonsmooth dynamical systems with application to friction problems}, J. Math.
  Pures Appl. (9), 83 (2004), pp.~17--51.

\bibitem{Adly-Hantoute-Bao}
{\sc S.~Adly, A.~Hantoute, and B.~T. Nguyen}, {\em Lyapunov stability of
  differential inclusions with {L}ipschitz {C}usco perturbations of maximal
  monotone operators}, Set-Valued Var. Anal., 28 (2020), pp.~345--368.

\bibitem{Adly3}
{\sc S.~Adly, A.~Hantoute, and M.~Th\'{e}ra}, {\em Nonsmooth {L}yapunov pairs
  for infinite-dimensional first-order differential inclusions}, Nonlinear
  Anal., 75 (2012), pp.~985--1008.

\bibitem{Adly1}
{\sc S.~Adly, A.~Hantoute, and M.~Th\'{e}ra}, {\em Nonsmooth {L}yapunov pairs
  for differential inclusions governed by operators with nonempty interior
  domain}, Math. Program., 157 (2016), pp.~349--374.

\bibitem{AABR02}
{\sc F.~Alvarez, H.~Attouch, J.~Bolte, and P.~Redont}, {\em A second-order
  gradient-like dissipative dynamical system with {H}essian-driven damping.
  {A}pplication to optimization and mechanics}, J. Math. Pures Appl. (9), 81
  (2002), pp.~747--779.

\bibitem{Arsie2}
{\sc A.~{Arsie} and C.~{Ebenbauer}}, {\em Refining LaSalle's invariance
  principle}, in 2009 American Control Conference, 2009, pp.~108--112.

\bibitem{Arsie1}
{\sc A.~Arsie and C.~Ebenbauer}, {\em Locating omega-limit sets using height
  functions}, J. Differential Equations, 248 (2010), pp.~2458--2469.

\bibitem{Artstein}
{\sc Z.~Artstein}, {\em Extensions of {L}ipschitz selections and an application
  to differential inclusions}, Nonlinear Anal., 16 (1991), pp.~701--704.

\bibitem{Aubin-Cellina}
{\sc J.-P. Aubin and A.~Cellina}, {\em Differential Inclusions. {S}et-Valued
  Maps and Viability Theory}, Springer, Berlin, 1984.

\bibitem{BB76}
{\sc J.~B. Baillon and H.~Brezis}, {\em Une remarque sur le comportement
  asymptotique des semigroupes non lin\'{e}aires}, Houston J. Math., 2 (1976),
  pp.~5--7.

\bibitem{barbu}
{\sc V.~Barbu}, {\em Nonlinear Differential Equations of Monotone Types in
  {B}anach Spaces}, Springer, New York, 2010.

\bibitem{bastien}
{\sc J.~Bastien}, {\em Study of a driven and braked wheel using maximal
  monotone differential inclusions: applications to the nonlinear dynamics of
  wheeled vehicles}, Archive of Applied Mechanics, 84 (2014), pp.~851--880.

\bibitem{Bauschke}
{\sc H.~H. Bauschke and P.~L. Combettes}, {\em Convex Analysis and Monotone
  Operator Theory in {H}ilbert Spaces}, Springer, Cham, 2nd~ed., 2017.

\bibitem{benilan-brezis72}
{\sc P.~Benilan and H.~Brezis}, {\em Solutions faibles d'\'{e}quations
  d'\'{e}volution dans les espaces de {H}ilbert}, Ann. Inst. Fourier
  (Grenoble), 22 (1972), pp.~311--329.

\bibitem{bgls-2002}
{\sc J.F.~Bonnans, J.C.~Gilbert, C.~Lemar{\'e}chal, and C.~Sagastiz{\'a}bal}, {\em
  Numerical Optimization. Theoretical and Practical Aspects}, 2nd~ed.,  Springer,
  Berlin, 2006.

\bibitem{brezis71}
{\sc H.~Br\'{e}zis}, {\em Monotonicity methods in {H}ilbert spaces and some
  applications to nonlinear partial differential equations}, in Contributions
  to nonlinear functional analysis ({P}roc. {S}ympos., {M}ath. {R}es. {C}enter,
  {U}niv. {W}isconsin, {M}adison, {W}is., 1971), 1971, pp.~101--156.

\bibitem{Brezis1973}
{\sc H.~Br\'{e}zis}, {\em Op\'{e}rateurs Maximaux Monotones et Semi-groupes de
  Contractions dans les Espaces de {H}ilbert}, North-Holland, Amsterdam, 1973.

\bibitem{brogliato}
{\sc B.~Brogliato and A.~Tanwani}, {\em Dynamical systems coupled with monotone
  set-valued operators: formalisms, applications, well-posedness, and
  stability}, SIAM Rev., 62 (2020), pp.~3--129.

\bibitem{bruck}
{\sc R.~E. Bruck, Jr.}, {\em Asymptotic convergence of nonlinear contraction
  semigroups in {H}ilbert space}, J. Functional Analysis, 18 (1975),
  pp.~15--26.

\bibitem{clarke1}
{\sc F.~Clarke}, {\em Functional Analysis, Calculus of Variations and Optimal
  Control}, Springer, London, 2013.

\bibitem{clarke2}
{\sc F.~H. Clarke, Y.~S. Ledyaev, R.~J. Stern, and P.~R. Wolenski}, {\em
  Nonsmooth Analysis and Control Theory}, Springer, New York, 1998.

\bibitem{crandall-pazy69}
{\sc M.~G. Crandall and A.~Pazy}, {\em Semi-groups of nonlinear contractions
  and dissipative sets}, J. Functional Analysis, 3 (1969), pp.~376--418.

\bibitem{csetnek}
{\sc E.~R. Csetnek}, {\em Continuous dynamics related to monotone inclusions
  and non-smooth optimization problems}, Set-Valued Var. Anal., 28 (2020),
  pp.~611--642.

\bibitem{DKV}
{\sc A.~L. Dontchev, M.~I. Krastanov, and V.~M. Veliov}, {\em {$\omega$}-limit
  sets for differential inclusions}, in Analysis and geometry in control theory
  and its applications, Springer, Cham, 2015, pp.~159--169.

\bibitem{Gonzalez}
{\sc F.~J. Gonzalez~de Cossio}, {\em First-Order Differential Inclusions
  Governed by Maximal Monotone Operators}, Master's thesis, Central European
  University, 2015.

\bibitem{Haddad}
{\sc W.~M. Haddad and V.~S. Chellaboina}, {\em Nonlinear Dynamical Systems and
  Control. {A} {L}yapunov-based Approach}, Princeton University Press,
  Princeton, NJ, 2008.

\bibitem{Hainry}
{\sc E.~Hainry}, {\em Computing omega-limit sets in linear dynamical systems},
  in Unconventional Computation, Lecture Notes in Comput. Sci. 5204,
  Springer, Berlin, 2008, pp.~83--95.

\bibitem{HJ15}
{\sc A.~Haraux and M.~Jendoubi}, {\em The Convergence Problem for Dissipative
  Autonomous Systems. {C}lassical Methods and Recent Advances}, Springer, Cham, 2015.

\bibitem{hiriart-lemarechal-1991}
{\sc J.~B. Hiriart-Urruty and C.~Lemar\'echal}, {\em Convex Analysis and
  Minimization Algorithms}, Springer, Berlin, 1993.

\bibitem{khalil}
{\sc H.~Khalil}, {\em Nonlinear Systems}, Macmillan, New
  York, 1992.

\bibitem{komura}
{\sc Y.~K{o}mura}, {\em Nonlinear semi-groups in {H}ilbert space}, J. Math.
  Soc. Japan, 19 (1967), pp.~493--507.

\bibitem{martinet}
{\sc B.~Martinet}, {\em D\'{e}termination approch\'{e}e d'un point fixe d'une
  application pseudo-contractante. {C}as de l'application prox}, C. R. Acad.
  Sci. Paris S\'{e}r. A-B, 274 (1972), pp.~A163--A165.

\bibitem{Boris2018}
{\sc B.~S. Mordukhovich}, {\em Variational Analysis and Applications},
  Springer, Cham, 2018.

\bibitem{Morosanu}
{\sc G.~Moro\c{s}anu}, {\em Nonlinear Evolution Equations and Applications}, Reidel, Dordrecht, 1988.

\bibitem{BTN}
{\sc B.~T. Nguyen}, {\em Contribution to Nonsmooth Lyapunov Stability of
  Differential Inclusions with Maximal Monotone Operators}, PhD thesis,
  Universit{\'e} de Limoges, Oct. 2017.

\bibitem{sorin}
{\sc J.~Peypouquet and S.~Sorin}, {\em Evolution equations for maximal monotone
  operators: asymptotic analysis in continuous and discrete time}, J. Convex
  Anal., 17 (2010), pp.~1113--1163.

\bibitem{phelps}
{\sc R.~R. Phelps}, {\em Lectures on maximal monotone operators}, Extracta
  Math., 12 (1997), pp.~193--230.

\bibitem{Qi}
{\sc A.~Qi}, {\em Asymptotic behavior of a class of evolution variational
  inequalities}, Abstr. Appl. Anal., (2012), Art. ID 905871.

\bibitem{Rock1}
{\sc R.~T. Rockafellar}, {\em Local boundedness of nonlinear, monotone
  operators}, Michigan Math. J., 16 (1969), pp.~397--407.

\bibitem{Rock}
{\sc R.~T. Rockafellar and R.-B. Wets}, {\em Variational Analysis},
  Springer, Berlin, 1998.

\bibitem{Showalter}
{\sc R.~E. Showalter}, {\em Monotone Operators in {B}anach Space and Nonlinear
  Partial Differential Equations}, American Mathematical Society, Providence,
  RI, 1997.

\end{thebibliography}
\end{document}